\declaretheorem[style=definition,qed=$\dashv$,numberwithin=section]
{definition}
\declaretheorem[style=plain,sibling=definition]{theorem}
\declaretheorem[style=plain,sibling=definition]{lemma}
\declaretheorem[style=plain,sibling=definition]{question}
\declaretheorem[style=plain,sibling=definition]{corollary}
\declaretheorem[style=definition,sibling=definition]{remark}
\declaretheorem[style=definition,sibling=definition]{conjecture}
\declaretheorem[style=plain,sibling=definition]{claim}
\declaretheorem[style=plain,sibling=definition]{claim*}
\numberwithin{equation}{section}
\titleformat{\section}{\normalsize\centering}{\thesection.}{1em}{}
\titleformat{\subsection}{\normalsize\centering}{\thesubsection.}{1em}{}
\numberwithin{equation}{section}
\newcommand{\rest}{\restriction}
\newcommand{\powerset}{{\wp }}
\def\P{{\mathcal{P} }}
\def\Q{{\mathcal{ Q}}}
\def\M{{\mathcal{M}}}
\def\T {{\mathcal{T}}}
\newcommand*{\TitleFont}{%
      \usefont{\encodingdefault}{\rmdefault}{b}{n}%
      \fontsize{12}{16}%
      \selectfont}
\begin{document}
\title{\TitleFont ON A CLASS OF  MAXIMALITY PRINCIPLES}
\renewcommand{\thefootnote}{\fnsymbol{footnote}} 
\footnotetext{\emph{Key words}: Maximality principles, forcing axioms, inner models, large cardinals}
\footnotetext{\emph{2010 MSC}: Primary 03E47; Secondary 03E55, 03E45, 03E57}
\renewcommand{\thefootnote}{\arabic{footnote}}
\author{\fontsize{11}{13} \selectfont DAISUKE IKEGAMI\footnote{Department of Mathematics, School of Engineering, Tokyo Denki University. E-mail: ikegami@mail.dendai.ac.jp} \\ \fontsize{11}{13} \selectfont NAM TRANG\footnote{Department of Mathematics, UC Irvine, CA, USA. Email: ntrang@math.uci.edu}}

\date{}
\maketitle
\begin{abstract}
We study various classes of maximality principles, $\rm{MP}(\kappa,\Gamma)$, introduced by J.D. Hamkins in \cite{hamkins2000simple}, where $\Gamma$ defines a class of forcing posets and $\kappa$ is an infinite cardinal. We explore the consistency strength and the relationship of $\textsf{MP}(\kappa,\Gamma)$ with various forcing axioms when $\kappa \in\{\omega,\omega_1\}$. In particular, we give a characterization of bounded forcing axioms for a class of forcings $\Gamma$ in terms of maximality principles MP$(\omega_1,\Gamma)$ for $\Sigma_1$ formulas. A significant part of the paper is devoted to studying the principle MP$(\kappa,\Gamma)$ where $\kappa\in\{\omega,\omega_1\}$ and $\Gamma$ defines the class of stationary set preserving forcings. We show that MP$(\kappa,\Gamma)$ has high consistency strength; on the other hand, if $\Gamma$ defines the class of proper forcings or semi-proper forcings, then by \cite{hamkins2000simple}, MP$(\kappa,\Gamma)$ is consistent relative to $V=L$.
\end{abstract}

\section{INTRODUCTION}
\begin{definition}
Let $\Gamma$ be a formula which defines a class of forcing posets (e.g. ccc, proper etc) and $\phi$ be a formula (in the language of set theory). We say that $\phi$ is \textbf{$\Gamma$-necessary} (over $V$) if $\phi$ is true in $V^{\mathbb{P}}$ for any $\mathbb{P}$ such that $\Gamma(\mathbb{P})$ holds (informally, we just write $\mathbb{P}\in \Gamma$). We say that $\phi$ is \textbf{forcibly $\Gamma$-necessary} if there is some $\mathbb{P}\in \Gamma$ such that $\phi$ is $\Gamma$-necessary over $V^{\mathbb{P}}$, i.e. $V^\mathbb{P} \vDash ``\phi$ is $\Gamma$-necessary".
\end{definition}

The following definition comes from \cite{hamkins2000simple}.

\begin{definition}
Let $\kappa$ be an infinite cardinal and $\Gamma$ define a class of forcing posets. The principle $\rm{MP}(\kappa,\Gamma)$ states that: for any $A\subseteq \kappa$ and any formula $\phi(v)$, if there is a $\mathbb{P}\in \Gamma$ such that $\phi(A)$ is $\Gamma$-necessary over $V^{\mathbb{P}}$, then $\phi(A)$ holds in $V$. The principle $\rm{NMP}$$(\kappa,\Gamma)$ states that  $\rm{MP}(\kappa,\Gamma)$ holds in $V^\mathbb{P}$ for all $\mathbb{P}\in \Gamma$. 
\end{definition}

$\rm{MP}$$(\kappa,\Gamma)$ and $\rm{NMP}$$(\kappa,\Gamma)$ are generally axiom schemata and one can formulate them in $\sf{ZFC}$. See \cite{hamkins2000simple} for a more detailed discussion.

Clearly, $\rm{NMP}(\kappa,\Gamma)$ implies $\rm{MP}(\kappa,\Gamma)$; also, if $\kappa<\lambda$, then MP$(\lambda,\Gamma)$ implies MP$(\kappa,\Gamma)$. On the other hand, it's not clear if there is any relationship at all between $\rm{MP}(\kappa,\Gamma_1)$ ($\rm{NMP}(\kappa,\Gamma_1)$, respectively) and $\rm{MP}(\kappa,\Gamma_2)$ ($\rm{NMP}(\kappa,\Gamma_2)$, respectively) even if $\Gamma_1 \subseteq \Gamma_2$.\footnote{Here, we identify the formulas $\Gamma_1$, $\Gamma_2$ with the classes of forcings they define. $\Gamma_1\subseteq \Gamma_2$ means $\forall x (\Gamma_1(x) \Rightarrow \Gamma_2(x))$. We will make this identification throughout the paper.}

Woodin (unpublished) has shown that if $\Gamma$ defines the class of all forcing posets, the principle $\rm{NMP}(\omega,\Gamma)$ is consistent relative to ``$\sf{AD}_\mathbb{R}+\Theta  $ is regular". Hamkins and Woodin (cf. \cite{hamkins2005necessary}) have shown that if $\Gamma$ is the class of ccc forcings, then $\rm{NMP}(\omega,\Gamma)$ is equiconsistent with ``$\sf{ZFC} + $ there is a weakly compact cardinal." For $\Gamma$ being the class of  proper, or semi-proper, or stationary set preserving forcings, we believe it's open whether $\rm{NMP}(\omega,\Gamma)$ is consistent. 

As remarked in \cite[page 22]{hamkins2000simple}, for any sufficiently rich $\Gamma$ and any $\kappa > \omega$, the principle $\rm{NMP}$$(\kappa,\Gamma)$ in general will be false (see, for example, Corollary \ref{boldfaceNMPfails}). However, $\rm{MP}(\omega_1,\Gamma)$ may still hold for some $\Gamma$.

In this paper, we explore the consistency strength of various maximality principles (as defined above) for several important classes of forcing posets, and their relations with various well-known forcing axioms such as The Martin Maximum ($\sf{MM}$), Woodin's $(*)$ axiom, and bounded forcing axioms.

\begin{definition}
\begin{enumerate}
\item A cardinal $\lambda$ is \textbf{reflecting} if $V_\lambda \prec V$.\footnote{This type of cardinals can be formalized in $\sf{ZFC}$ just by enriching the language with a parameter $\lambda$ for the relevant cardinal and expressing the elementarity of $V_\lambda$ with $V$ by means of infintely many formulae in parameter $\lambda$.}
\item A cardinal $\kappa$ is \textbf{hyper-huge} if whenever $\lambda>\kappa$, there is an elementary embedding $j:V\rightarrow M$ such that crt$(j)=\kappa$, $j(\kappa)>\lambda$, and $M^{j(\lambda)}\subset M$.
\end{enumerate}
\end{definition}
\cite{hamkins2000simple} shows that MP$(\omega_1,\Gamma)$ has very low consistency strength for $\Gamma$ defining the class of $\sigma$-closed, proper, or semi-proper forcings; in particular, \cite{hamkins2000simple} shows that these principles are consistent relative to $V=L$. On the other hand, if $\Gamma$ defines the class of stationary set preserving forcings, Theorem \ref{BMM} shows MP$(\omega,\Gamma)$ implies that $0^\sharp$ exists and more. If $\Gamma$ is the class of stationary set preserving forcings, $\rm{MP}$$(\omega,\Gamma)$ and $\rm{MP}$$(\omega_1,\Gamma)$ may be very strong; but  $\rm{MP}$$(0,\Gamma)$, on the other hand, is consistent relative to $\sf{ZFC}$ (by \cite{hamkins2000simple}).

The following theorem deals with NMP for the class of $\sigma$-closed forcings.

\begin{theorem}\label{FromReflectingCard}
\begin{enumerate}[(1)]
\item Suppose $\Gamma$ defines the class of $\sigma$-closed forcings. Then $\rm{NMP}(\omega,\Gamma)$ is consistent relative to $\sf{ZFC}$.
\item Suppose $\Gamma$ defines the class of $\sigma$-closed forcings. Suppose there is a hyper-huge cardinal. Then $\rm{NMP}$$(\omega_1,\Gamma)$ fails.
\end{enumerate}
\end{theorem}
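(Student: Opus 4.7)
For part (1), my plan is to reduce the consistency of $\rm{NMP}(\omega,\Gamma)$ to the consistency of $\rm{MP}(\omega,\Gamma)$, which is already known by the excerpt's reference to \cite{hamkins2000simple}. Specifically, I will argue that $\rm{MP}(\omega,\Gamma)$ already implies $\rm{NMP}(\omega,\Gamma)$ when $\Gamma$ defines the class of $\sigma$-closed forcings. Three features of $\Gamma$ drive the argument: (i) $\Gamma$ is closed under two-step iteration; (ii) $\sigma$-closed posets add no reals, so every $A\subseteq\omega$ in a $\sigma$-closed extension of $V$ already lies in $V$; and (iii) for each formula $\phi$, the formula
\[
\psi_\phi(A)\ \equiv\ \text{``for every }\sigma\text{-closed }\mathbb{Q},\ \mathbb{Q}\forces\phi(A)\text{''}
\]
is upward absolute to $\sigma$-closed extensions, since any $\sigma$-closed poset appearing in such an extension can be absorbed, together with the extending forcing, into a single $\sigma$-closed iteration over the ground. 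Granting (i)--(iii), if $\mathbb{R}\in\Gamma$, $A\subseteq\omega$ lies in $V^{\mathbb{R}}$, and $\phi(A)$ is forcibly $\sigma$-closed-necessary in $V^{\mathbb{R}}$ via some $\mathbb{P}$, then $\mathbb{R}*\dot{\mathbb{P}}\in\Gamma$, and by (iii) it forces $\psi_\phi(A)$ to be itself $\sigma$-closed-necessary; hence $\psi_\phi(A)$ is forcibly $\sigma$-closed-necessary in $V$. Applying $\rm{MP}(\omega,\Gamma)$ in $V$ to $\psi_\phi$ with parameter $A$ gives $V\models\psi_\phi(A)$, and specializing to $\mathbb{R}$ yields $V^{\mathbb{R}}\models\phi(A)$, which is precisely the $\rm{MP}(\omega,\Gamma)$ instance required in $V^{\mathbb{R}}$.

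For part (2), the plan is a proof by contradiction: assume $\rm{NMP}(\omega_1,\Gamma)$ holds together with a hyper-huge cardinal $\kappa$, and produce a $\sigma$-closed extension of $V$ in which $\rm{MP}(\omega_1,\Gamma)$ fails. The natural candidate is $V[G]$, for $G$ generic over $\mathrm{Col}(\omega_1,{<}\kappa)$; this forcing is $\sigma$-closed and identifies $\kappa$ with $\omega_2^{V[G]}$. In $V[G]$ a subset $A\subseteq\omega_1$ can code, via the generic collapse, a well-ordering of $V_\kappa$ together with parameters witnessing the hyper-huge property of $\kappa$ in $V$. One then exhibits such an $A$ and a formula $\phi(v)$ asserting a suitable reflection or lifting property of the structure coded by $v$, arranged so that $\phi(A)$ is forcibly $\sigma$-closed-necessary in $V[G]$ yet fails outright in $V[G]$; by construction this contradicts the $\rm{MP}(\omega_1,\Gamma)$ that $\rm{NMP}(\omega_1,\Gamma)$ would provide in $V[G]$.

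The hard part will be the construction of the witness pair $(A,\phi)$. The strategy is to use a hyper-huge embedding $j:V\to M$ with crt$(j)=\kappa$, $j(\kappa)>\lambda$, and $M^{j(\lambda)}\subseteq M$ for a sufficiently large $\lambda$, and to lift $j$ through $\mathrm{Col}(\omega_1,{<}\kappa)$ and through a carefully chosen auxiliary $\sigma$-closed forcing $\mathbb{S}\in V[G]$. The closure $M^{j(\lambda)}\subseteq M$ supplies the master condition for the lift, and the lifted embedding $\tilde{j}$ serves as a witness that $\phi(A)$ is $\sigma$-closed-necessary in $V[G]^{\mathbb{S}}$. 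The subtlety is to arrange $\phi$ so that this necessity genuinely requires the auxiliary forcing --- equivalently, so that $\phi(A)$ still fails in $V[G]$ before that forcing is applied --- thereby securing the failure of $\rm{MP}(\omega_1,\Gamma)$ in $V[G]$ and the desired contradiction with $\rm{NMP}(\omega_1,\Gamma)$.
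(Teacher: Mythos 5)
Your part (1) is correct but takes a genuinely different route from the paper. The paper constructs a countable support iteration of $\sigma$-closed posets of length $2^\omega$ with a book-keeping enumeration of pairs $(\phi,x)$, $x\in\mathbb{R}$, realizing each forcibly $\Gamma$-necessary statement at some stage, and then verifies $\rm{NMP}(\omega,\Gamma)$ in the final model using exactly the two facts you isolate (no new reals; closure under two-step iteration). Your reduction of $\rm{NMP}(\omega,\Gamma)$ to $\rm{MP}(\omega,\Gamma)$ is sound: $\psi_\phi$ is a single first-order formula, the composition argument shows not only that $\psi_\phi(A)$ holds in $V^{\mathbb{R}*\dot{\mathbb{P}}}$ but that it is $\Gamma$-necessary there (which is what you need to apply $\rm{MP}$ in $V$), and $A\in V$ because $\sigma$-closed forcing adds no reals. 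What your approach buys is modularity: it isolates the general principle that $\rm{MP}$ implies $\rm{NMP}$ for any class closed under two-step iteration that adds no parameters of the relevant size, and it delegates the iteration entirely to the cited consistency of $\rm{MP}(\omega_1,\Gamma)$ (hence $\rm{MP}(\omega,\Gamma)$) relative to $V=L$.

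Part (2), however, has a genuine gap. The entire mathematical content of that direction is the construction of the witness pair $(A,\phi)$ --- a statement that is forcibly $\Gamma$-necessary yet false in some $\sigma$-closed extension --- and you do not construct it; you only list the properties it should have. Moreover, the mechanism you propose (lifting a hyper-huge embedding through $\mathrm{Coll}(\omega_1,{<}\kappa)$ and an auxiliary poset, with the closure of $M$ supplying a master condition) is not how the hyper-huge hypothesis enters. The paper's proof runs through Usuba's theorem that a hyper-huge cardinal implies the generic mantle $g\mathbb{M}$ equals the mantle $\mathbb{M}$ and that $\mathbb{M}$ is a ground of $V$. The generically invariant class $g\mathbb{M}$ is what supplies the needed ``button'' statements: for $A\subseteq\omega_1$ coding an ordinal $\alpha$ (with $\alpha^\omega\subseteq\alpha$), the statement ``$(\alpha^+)^{g\mathbb{M}}<\omega_2$'' can always be made $\Gamma$-necessary by a further $\sigma$-closed collapse, so $\rm{NMP}(\omega_1,\Gamma)$ forces $(\alpha^+)^{g\mathbb{M}}<\alpha^+$ for a proper class of $\alpha$; but $V$ being a set-generic extension of $\mathbb{M}=g\mathbb{M}$ forces $(\alpha^+)^{g\mathbb{M}}=\alpha^+$ on a tail. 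Without some generically invariant definable structure of this kind, there is no visible way to produce a $\phi(A)$ that is forcibly $\Gamma$-necessary over the extension yet false there, so your sketch for (2) cannot be completed as written.
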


\begin{remark}
We do not know if NMP$(\omega_1,\Gamma)$ is consistent, where $\Gamma$ defines the class of $\sigma$-closed forcings.
\end{remark}

The next theorem gives an upper bound consistency strength for MP$(\omega_1,\Gamma)$, where $\Gamma$ defines the class of stationary set preserving forcings.

\begin{theorem}
\label{FromSupercompact}

Let $\Gamma$ define the class of stationary set preserving forcing posets. Suppose there is a proper class of strongly compact cardinals and an inaccessible cardinal which is reflecting. Then in some generic extension of $V$, $\rm{MP}(\omega_1,\Gamma)$ holds.
\end{theorem}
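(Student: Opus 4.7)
The plan is to build a generic extension $V[G]$ via an iteration of stationary-set-preserving (SSP) forcings of length $\lambda$, where $\lambda$ denotes the inaccessible reflecting cardinal, and to verify $\mathrm{MP}(\omega_1,\Gamma)$ by a Hamkins-style button-pushing argument. Call a pair $(A,\phi)$ with $A\subseteq\omega_1$ a \emph{button} if $\phi(A)$ is forcibly $\Gamma$-necessary in the ambient model; the point of buttons is that once pressed (by forcing with a witnessing SSP $\mathbb{P}$), $\phi(A)$ becomes $\Gamma$-necessary and so persists in any further SSP extension. The construction is designed to press every button that ever appears.

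Since $\lambda$ is reflecting and there is a proper class of strongly compact cardinals in $V$, reflection yields unboundedly many strongly compacts below $\lambda$. First I would fix a Laver-type anticipation function $f:\lambda\to V_\lambda$ guessing every triple $(\alpha,\dot A,\dot{\mathbb{Q}})$ that could code a button together with a witnessing SSP poset; such $f$ is obtained in $V$ by a Laver preparation using the strongly compacts below $\lambda$. Then I would define the iteration $\mathbb{P}_\lambda=\langle \mathbb{P}_\alpha,\dot{\mathbb{Q}}_\alpha:\alpha<\lambda\rangle$ by setting $\dot{\mathbb{Q}}_\alpha$ equal to the poset presented by $f(\alpha)$ whenever $\mathbb{P}_\alpha$ forces it to be SSP and to push a named button $\phi(\dot A)$, and trivial otherwise. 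The support at limit stages must be chosen so that $\mathbb{P}_\lambda$ is SSP, $\lambda$-cc, and preserves $\omega_1$, using the cofinal sequence of strongly compacts below $\lambda$ to invoke an SSP iteration theorem in the spirit of Foreman--Magidor--Shelah.

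To verify $\mathrm{MP}(\omega_1,\Gamma)$ in $V[G]$, where $G$ is $V$-generic for $\mathbb{P}_\lambda$, suppose $A\subseteq\omega_1$ in $V[G]$ and $\phi(A)$ is forcibly $\Gamma$-necessary, witnessed by some SSP $\mathbb{P}\in V[G]$. The $\lambda$-cc of $\mathbb{P}_\lambda$ combined with $V_\lambda\prec V$ yields $V_\lambda[G\cap V_\lambda]\prec V[G]$, so both $A$ and a name for a witnessing poset can be located at some stage $\beta<\lambda$. By the anticipation property of $f$, there is $\alpha \geq \beta$ at which the bookkeeping selects an SSP $\dot{\mathbb{Q}}_\alpha$ pushing the button $\phi(A)$. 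After stage $\alpha+1$, $\phi(A)$ is $\Gamma$-necessary over $V[G_{\alpha+1}]$, and since the tail $\mathbb{P}_\lambda/G_{\alpha+1}$ is SSP in $V[G_{\alpha+1}]$, $\phi(A)$ persists to $V[G]$, as required.

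The principal obstacle is the SSP iteration theorem itself: unlike the proper or semi-proper classes, SSP is not closed under ordinary countable-support iteration, so the cofinal sequence of strongly compacts must be used to ensure that $\mathbb{P}_\lambda$ and all its tails remain SSP and preserve $\omega_1$, most plausibly via a revised-countable-support construction with appropriate collapses at strongly-compact stages. A secondary issue is crafting $f$ so that every potential button in $V[G]$ is anticipated at some bounded stage, but once a strongly compact below $\lambda$ is in hand, reflection through $V_\lambda\prec V$ reduces this to a standard Laver-function argument.
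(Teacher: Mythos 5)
Your architecture matches the paper's: a revised countable support iteration of length the inaccessible reflecting cardinal, book-keeping of ``buttons'' $(A,\phi)$, collapses at strongly compact stages, and a tail-is-SSP persistence argument at the end. But the step you flag as ``the principal obstacle'' is in fact the heart of the proof, and you leave it unresolved. There is no ``SSP iteration theorem in the spirit of Foreman--Magidor--Shelah'' to invoke: the class of stationary set preserving posets is not known to be iterable by any support, revised or otherwise. The missing idea is Shelah's theorem (\cite[Chapter~XIII, Section~1]{Shelah}) that after forcing with $\mathrm{Coll}(\omega_1,<\kappa)$ for $\kappa$ strongly compact, \emph{every} stationary set preserving poset is semi-proper. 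This is precisely why each iterand is taken of the form $\mathrm{Coll}(\omega_1,<\kappa)*\mathbb{Q}$ with $\kappa$ strongly compact below $\delta$ and $\mathbb{Q}$ the button-pushing SSP poset computed in the collapse extension: the composition is then semi-proper in $V^{\mathbb{P}_\alpha}$, so the whole construction is an RCS iteration of \emph{semi-proper} posets, for which the iteration theorem is available and the tails are semi-proper (hence SSP). Without this conversion your iteration is not known to preserve $\omega_1$ or stationary sets, and the final persistence step collapses. Note also that the buttons you press are then of the form ``forcibly necessary over $V^{\mathbb{P}_\alpha*\mathrm{Coll}(\omega_1,<\kappa)}$ by an SSP poset of that model,'' and the verification must check that the witness $(\mathbb{P}/\mathbb{P}_\alpha)*\mathbb{Q}$ arising from the full extension has exactly this form; this works because $\mathbb{P}/\mathbb{P}_\alpha$ itself begins with the collapse.

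Two smaller points. The Laver-type anticipation function is unnecessary (and Laver functions for strongly compact cardinals are problematic in any case): since the iteration is $\delta$-cc with $\delta$ inaccessible, a standard book-keeping enumeration $(x_\alpha)_{\alpha<\delta}$ of all $\mathbb{P}_\delta$-names for subsets of $\omega_1$ suffices, and one simply handles each pair $(x_\alpha,\phi)$ cofinally often. The real use of the hypothesis that $\delta$ is reflecting is not to anticipate buttons but to \emph{bound the witnessing posets}: at stage $\alpha$, a poset $\mathbb{Q}'$ witnessing forcible necessity of $\phi[x_\alpha]$ could a priori be arbitrarily large, and reflection (preserved since $\mathbb{P}_\alpha\in V_\delta$) lets one find such a $\mathbb{Q}'$ inside $V_\delta$ of the relevant extension, keeping $\mathbb{P}_{\alpha+1}\in V_\delta$. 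Your closing sentence gestures at this but conflates it with the anticipation issue.
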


The proofs of Theorems \ref{FromReflectingCard} and \ref{FromSupercompact} form the content of Section \ref{UpperBounds}.

\begin{definition}\label{StratifiedMP}
Let $\Gamma$ define a class of forcing posets, $\kappa$ is a cardinal. For each $n\leq \omega$, $\rm{MP}$$_{\Sigma_n}(\kappa,\Gamma)$ is the restriction of $\rm{MP}$$(\kappa,\Gamma)$ to $\Sigma_n$ formulas. More precisely, $\rm{MP}$$_{\Sigma_n}(\kappa,\Gamma)$ is the statement: suppose $\phi(v)$ is a $\Sigma_n$ formula (in the language of set theory), $A\subseteq \kappa$, and $\mathbb{P}\in\Gamma$ is such that $\phi(A)$ is $\Gamma$-necessary over $V^{\mathbb{P}}$, then $\phi(A)$ is true.

We define $\rm{MP}$$_{\Pi_n}(\kappa,\Gamma)$, $\rm{NMP}$$_{\Sigma_n}(\kappa,\Gamma)$, $\rm{NMP}$$_{\Pi_n}(\kappa,\Gamma)$ etc. similarly.
\end{definition}

The following two theorems establish some connections between the forcing axioms $\sf{MM}^{++}$, $\rm{MP} (\omega, \Gamma)$ and $\rm{MP} (\omega_1, \Gamma)$ where $\Gamma$ is the class of stationary set preserving forcings. 
\begin{theorem}\label{MM++vsMP}
Let $\Gamma$ define the class of all stationary set preserving forcings. 
\begin{enumerate}
\item Suppose that $\sf{MM}^{++}$ holds and that there are proper class many Woodin cardinals. Then $\rm{MP}$$_{\Pi_2}(\omega_1, \Gamma)$ holds.

\item ``$\sf{MM}^{++}$ + there are proper class many Woodin cardinals" does not imply $\rm{MP}$$(\omega_1, \Gamma)$.

\item $\sf{MM}^{++}$ does not imply $\rm{MP}$$_{\Pi_2}(\omega_1, \Gamma)$.
\end{enumerate}
\end{theorem}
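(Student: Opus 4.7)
The plan is to reduce $\Pi_2$ forcible $\Gamma$-necessity to $\Sigma_1$ forcible $\Gamma$-necessity with set parameters of arbitrary rank. Write the $\Pi_2$ formula as $\phi(v) = \forall x\,\exists y\,\psi(x,y,v)$ with $\psi$ bounded, fix $A \subseteq \omega_1$ and $\mathbb{P} \in \Gamma$ witnessing that $\phi(A)$ is $\Gamma$-necessary over $V^{\mathbb{P}}$. For arbitrary $x \in V$, the $\Sigma_1$ statement $\exists y\,\psi(x,y,A)$ holds in $V^{\mathbb{P}}$ and in every further SSP extension, so it is itself forcibly $\Gamma$-necessary over $V$ with parameters $(x,A)$. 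The conclusion $V \models \exists y\,\psi(x,y,A)$ follows by combining: (i) MM$^{++}$ implies BMM$^{++}$, which by the $\Sigma_1$ characterization of bounded forcing axioms given earlier in the paper is $\mathrm{MP}_{\Sigma_1}(\omega_1, \Gamma)$ for parameters in $H_{\omega_2}$; and (ii) the proper class of Woodin cardinals permits absorbing the parameter $x$ via Woodin's stationary tower up to a Woodin $\delta > \mathrm{rank}(x)$, producing a generic elementary embedding $j\colon V \to M$ in which $j(A)$ and (the image of) $x$ land inside $H_{\omega_2}^M$, where MM$^{++}$ (hence BMM$^{++}$) holds in $M$ by elementarity and supplies the witness; this witness then transfers back to $V$ via elementarity of $j$ applied to the $\Sigma_1$ statement. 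The main obstacle is precisely the arbitrary rank of the parameter $x$; this is where the proper class (rather than merely set-many) of Woodin cardinals is essential.

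\textbf{Part (2).} The plan is a consistency-strength separation. MM$^{++}$ together with a proper class of Woodin cardinals is consistent from a supercompact cardinal, whereas $\mathrm{MP}(\omega_1, \Gamma)$ carries a strictly higher lower bound, obtainable by an inner-model-theoretic refinement of the argument behind \rthm{FromReflectingCard}(2), applied with $\omega_1$-parameters in place of reals so as to read off large-cardinal content of unbounded complexity from $\mathrm{MP}(\omega_1, \Gamma)$. If every model of MM$^{++}$ + proper class of Woodin cardinals satisfied $\mathrm{MP}(\omega_1, \Gamma)$, the two theories would be equiconsistent, contradicting this gap; so models of MM$^{++}$ + Woodins in which $\mathrm{MP}(\omega_1, \Gamma)$ fails must exist. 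A more concrete alternative is to isolate in the canonical Foreman--Magidor--Shelah model of MM$^{++}$ + Woodins (built over a supercompact) a specific $\Sigma_n$ formula ($n \geq 3$) encoding a strong reflection principle that can be made forcibly $\Gamma$-necessary via a suitable SSP iteration but fails in the base model. The main obstacle is pinpointing either the precise strength lower bound for $\mathrm{MP}(\omega_1, \Gamma)$ or the concrete separating formula.

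\textbf{Part (3).} The plan is to construct a model of MM$^{++}$ in which the proper-class-of-Woodin-cardinals hypothesis essential to the proof of (1) fails, and exhibit a $\Pi_2$ formula witnessing the failure of $\mathrm{MP}_{\Pi_2}(\omega_1, \Gamma)$. Beginning with a ground model containing a supercompact $\kappa$ with only set-many Woodin cardinals above $\kappa$ (arranged by appropriate truncation of a stronger background universe), force MM$^{++}$ via the standard Laver iteration to obtain $V \models$ MM$^{++}$ without a proper class of Woodin cardinals. A candidate $\Pi_2$ formula $\phi(A)$ encodes the very Woodin-dependent inner-model reflection that drives the proof of (1)---for instance, ``for every ordinal $\alpha$ there exists, relative to $A$, an iterable mouse containing a Woodin cardinal above $\alpha$''---which an SSP coding forcing can arrange to be persistently true in $V^{\mathbb{P}}$ yet fails in $V$ by the absence of Woodins above a fixed level. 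The main obstacle is twofold: preserving MM$^{++}$ across the truncation-plus-iteration construction, and verifying that the chosen $\phi(A)$ is both genuinely forcibly $\Gamma$-necessary in the resulting $V$ and false there.
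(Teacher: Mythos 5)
Your proposal for part (1) reduces the $\Pi_2$ statement to instances of $\Sigma_1$ forcible necessity with a parameter $x$ of arbitrary rank, and you correctly identify that this parameter is the obstacle; but your proposed fix does not work. If the stationary-tower embedding $j\colon V\to M$ has critical point $\omega_2^V$ (which you need so that $j(A)=A$ and so that the final appeal to elementarity transfers the witness back to $V$), then $j(\omega_1)=\omega_1$, and elementarity gives $j(x)\in H_{\omega_2}^M$ if and only if $x\in H_{\omega_2}^V$; the image of a large parameter never becomes hereditarily of size $\aleph_1$ in $M$. If instead you arrange that $x$ itself (rather than $j(x)$) is small in $V[H]$ or in $M$, then $x\notin\mathrm{ran}(j)$ and the concluding elementarity step is unavailable. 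You also never verify that the $\Sigma_1$ statement is forcibly $\Gamma$-necessary \emph{over $M$}, which is what applying $\mathrm{MP}_{\Sigma_1}(\omega_1,\Gamma)$ inside $M$ would require. The paper's proof avoids the decomposition entirely: assuming $\phi[x]$ fails in $V$, it reflects the $\Sigma_2$ statement $\neg\phi[x]$ to some $V_\gamma$ with $\gamma$ inaccessible, uses Viale's characterization of $\textsf{MM}^{++}$ (Lemma \ref{MM++equivalence}) to realize a $\mathbb{P}$-generic $G$ inside a $\mathbb{P}_{<\delta}$-generic $H$ with $\mathbb{P}_{<\delta}/G$ stationary set preserving, and then combines $V_\delta^M=V_\delta^{V[H]}$, elementarity (which fixes $x\subseteq\omega_1$), and upward $\Sigma_2$-absoluteness from $V_{j(\gamma)}^{V[H]}$ to $V[H]$ to contradict the $\Gamma$-necessity of $\phi[x]$.

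Parts (2) and (3) also have genuine gaps. For (2), your separation argument rests on the premise that $\mathrm{MP}(\omega_1,\Gamma)$ (or its conjunction with $\textsf{MM}^{++}$ plus proper class many Woodins) has strictly higher consistency strength than $\textsf{MM}^{++}$ plus proper class many Woodins; this is not established anywhere --- the paper's lower bounds for $\mathrm{MP}(\omega_1,\Gamma)$ alone are at the level of sharps (Theorem \ref{BMM}), and Theorem \ref{FromSupercompact} gives an upper bound --- so the key step is unsupported. The paper instead argues concretely: after forcing $V=\mathrm{gHOD}$ and then $\textsf{MM}^{++}$ from the least supercompact $\kappa$, the parameter-free statement ``the least supercompact of $\mathrm{gHOD}$ is less than $\omega_2$'' is made $\Gamma$-necessary by collapsing $\omega_2$ (generic invariance of $\mathrm{gHOD}$ is what keeps the statement evaluating correctly in all further extensions), yet it is false in the model since $\kappa=\omega_2$ there. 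For (3), your candidate $\Pi_2$ formula asserts the existence of iterable mice with Woodin cardinals above every ordinal; no stationary set preserving (indeed no set) forcing can make such a statement true, much less $\Gamma$-necessary, when it fails in the ground model, since forcing cannot create mice with Woodin cardinals. The paper uses the opposite mechanism: over a model of $\textsf{MM}^{++}$ with boundedly many (but at least one) inaccessibles, the $\Pi_2$ statement ``there is no inaccessible cardinal'' becomes $\Gamma$-necessary after collapsing a sufficiently large cardinal, while being false in the model itself.
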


\begin{theorem}\label{NothingImpliesAnything}
Let $\Gamma$ define the class of stationary set preserving forcing posets.
\begin{enumerate}
\item $\rm{MP}$$(\omega_1,\Gamma)$ does not imply $\sf{MM}^{++}$.
\item $\rm{MP}$$(\omega_1,\Gamma)$ implies $\sf{BMM}$ and $\sf{BMM}$ does not imply $\rm{MP}$$(\omega_1,\Gamma)$.
\item $\rm{MP}$$(\omega,\Gamma)$ does not imply $\rm{MP}$$(\omega_1,\Gamma)$ and $\rm{MP}$$(0,\Gamma)$ does not imply $\rm{MP}$$(\omega,\Gamma)$.
\end{enumerate}

\end{theorem}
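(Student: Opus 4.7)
The plan is to treat each of the four non-implications separately, combining consistency-strength gaps with direct forcing constructions; clauses (2) and (3) succumb to general tools, while clause (1) requires an explicit construction and is the main obstacle. For the second clause of (3), I would combine the remark preceding Theorem~\ref{FromReflectingCard}---that $\rm{MP}(0,\Gamma)$ is consistent relative to $\sf{ZFC}$---with Theorem~\ref{BMM}, which asserts that $\rm{MP}(\omega,\Gamma)$ already implies that $0^{\sharp}$ exists. Starting from a ground model satisfying $V=L$, I would force $\rm{MP}(0,\Gamma)$ via Hamkins' set-forcing construction and use that set forcing cannot add $0^{\sharp}$; the extension then satisfies $\rm{MP}(0,\Gamma) + \neg\exists 0^{\sharp}$ and so refutes $\rm{MP}(\omega,\Gamma)$. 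The first clause of (3) uses an analogous gap: by clause (2) of the present theorem, $\rm{MP}(\omega_1,\Gamma)$ implies $\sf{BMM}$, whose large-cardinal strength is substantially higher than ``$0^{\sharp}$ exists''. I would force $\rm{MP}(\omega,\Gamma)$ over a ground model with just enough large cardinals for $\rm{MP}(\omega,\Gamma)$ but not enough for $\sf{BMM}$, so that $\sf{BMM}$---and hence $\rm{MP}(\omega_1,\Gamma)$---must fail in the extension.

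For clause (2), the first half will follow directly from the characterization of bounded forcing axioms established in an earlier section of the paper: $\sf{BFA}(\Gamma)$ is equivalent to $\rm{MP}_{\Sigma_1}(\omega_1,\Gamma)$. Since $\rm{MP}(\omega_1,\Gamma)$ trivially implies its $\Sigma_1$ restriction, and $\sf{BMM}$ is exactly $\sf{BFA}(\Gamma)$ for $\Gamma$ the class of stationary set preserving forcings, the implication is immediate. For the second half, I would appeal to Theorem~\ref{MM++vsMP}(2): there is a model of $\sf{MM}^{++}$ (with proper class many Woodin cardinals) in which $\rm{MP}(\omega_1,\Gamma)$ fails, and since $\sf{MM}^{++}$ trivially implies $\sf{BMM}$, that same model witnesses $\sf{BMM} \not\Rightarrow \rm{MP}(\omega_1,\Gamma)$.

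The main obstacle is clause (1). The plan is to modify the construction of Theorem~\ref{FromSupercompact} so that, along with forcing $\rm{MP}(\omega_1,\Gamma)$, the final model also refutes some specific consequence of $\sf{MM}^{++}$. Concretely, I would interleave the iteration producing $\rm{MP}(\omega_1,\Gamma)$ with auxiliary SSP-forcings introducing counterexamples to $\sf{MM}^{++}$---for instance, destroying saturation of $\rm{NS}_{\omega_1}$ at some stage, or introducing an $\omega_1$-sequence of dense sets together with $\omega_1$-many names for stationary subsets of $\omega_1$ which no single filter can simultaneously meet. The hard part will be verifying that such interleaving preserves the maximality principle: this reduces to showing that every ``pressed button'' of the Theorem~\ref{FromSupercompact} iteration remains pressed after the auxiliary forcings, so that each $\phi(A)$ which was $\Gamma$-necessary at some stage is still $\Gamma$-necessary at the end. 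This preservation analysis, mirroring the standard button-accounting in Hamkins-style iterations but with extra bookkeeping to keep the $\sf{MM}^{++}$-violation alive, is the main technical content.
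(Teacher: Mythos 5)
Your treatment of clause (2) and of the second half of clause (3) is fine. For (2), the first implication is exactly the paper's argument via Theorem~\ref{MPvsFA}, and your route to the non-implication --- citing the model $W$ of Theorem~\ref{MM++vsMP}(2), in which $\sf{MM}^{++}$ (hence $\sf{BMM}$) holds while $\rm{MP}(\omega_1,\Gamma)$ fails --- is legitimate and arguably more economical than the paper's construction, which instead builds a $\mathbb{P}_{\rm{max}}$ extension of $L^{\M_1^\sharp}(\powerset(\mathbb{R})^M)$ for $M\vDash\sf{AD}^+$ and uses the proof of Theorem~\ref{gettingPD} to rule out $\rm{MP}(\omega_1,\Gamma)$ there; the paper's model has the side benefit of also witnessing Theorem~\ref{MM++vsStar}(1). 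The second half of (3) is exactly as in the paper.

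The first half of (3) and clause (1) contain genuine gaps. For (3), your plan rests on a consistency-strength gap between $\rm{MP}(\omega,\Gamma)$ and $\sf{BMM}$ that is not available: Theorem~\ref{BMM} shows $\rm{MP}(\omega,\Gamma)$ \emph{already} implies that $X^\sharp$ exists for all $X$, which is the same known lower bound as for $\sf{BMM}$, and no upper bound for $\rm{MP}(\omega,\Gamma)$ strictly below that of $\sf{BMM}$ is established anywhere (the paper explicitly lists the exact strength of $\rm{MP}(\omega,\Gamma)$ as open). So ``just enough large cardinals for $\rm{MP}(\omega,\Gamma)$ but not enough for $\sf{BMM}$'' is not something you can arrange. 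The paper instead argues directly: over a model of $\rm{MP}(\omega,\Gamma)$, force with the poset of countable partial functions from $\omega_1$ to $2$; this is $\sigma$-closed, hence stationary set preserving and adds no reals, so $\rm{MP}(\omega,\Gamma)$ is preserved, yet $\sf{CH}$ holds in the extension, contradicting the consequence $2^{\aleph_0}=\aleph_2$ of $\rm{MP}(\omega_1,\Gamma)$ from Theorem~\ref{BMM}. For clause (1), your interleaving of the Theorem~\ref{FromSupercompact} iteration with $\sf{MM}^{++}$-violating forcings is both underdeveloped (you defer exactly the preservation analysis that would constitute the proof) and unnecessary: the correct move is a single \emph{final} forcing over an arbitrary model of $\rm{MP}(\omega_1,\Gamma)$, namely the $<\omega_2$-strategically closed forcing adding a $\square_{\omega_1}$-sequence. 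Because this poset is stationary set preserving and adds no new $\omega_1$-sequences of ordinals, any $A\subseteq\omega_1$ and any witness to forcible $\Gamma$-necessity in the extension already yield one over the ground model, so $\rm{MP}(\omega_1,\Gamma)$ survives, while $\square_{\omega_1}$ refutes $\sf{MM}^{++}$. No button-accounting through the iteration is needed.
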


We now discuss the connection between $\text{MP}(\omega_1, \Gamma)$ and Woodin's Axiom (*), where $\Gamma$ is the class of stationary set preserving forcings. 

\begin{theorem}\label{MM++vsStar}
Let $\Gamma$ define the class of stationary set preserving forcings. 
\begin{enumerate}
\item Woodin's Axiom (*) does not imply $\rm{MP}$$(\omega_1, \Gamma)$. 

\item $\rm{MP}$$(\omega_1, \Gamma)$ does not imply Woodin's Axiom (*).
\end{enumerate}
\end{theorem}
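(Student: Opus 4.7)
I would invoke the Asper\'o--Schindler theorem that $\sf{MM}^{++} \Rightarrow (*)$. By Theorem \ref{MM++vsMP}(2), there is a model $W$ of $\sf{MM}^{++}$ together with a proper class of Woodin cardinals in which $\rm{MP}(\omega_1,\Gamma)$ fails; by Asper\'o--Schindler, $W \models (*)$. Hence $W$ witnesses $(*) \not\Rightarrow \rm{MP}(\omega_1,\Gamma)$. This step is essentially a composition of existing results; no new forcing construction is required.

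\textbf{Plan for (2).} Here I must produce a model of $\rm{MP}(\omega_1,\Gamma)+\neg(*)$. The natural strategy is to start from a large cardinal setup as in Theorem \ref{FromSupercompact} and run the iteration producing a model $W$ satisfying $\rm{MP}(\omega_1,\Gamma)$, then verify $(*)$ fails in $W$. Since $(*)$ pins down the $\mathbb{P}_{\max}$-theory of $L(P(\omega_1))$ over $L(\mathbb{R})$, and the iteration of Theorem \ref{FromSupercompact} is not tailored to realize a $\mathbb{P}_{\max}$-generic over $L(\mathbb{R})^W$, one expects $(*)$ to break in $W$. To certify this, I would isolate a specific $\Pi_2$-consequence of $(*)$ in $H(\aleph_2)$ -- for example Woodin's $\psi_{AC}$, or a $\Pi_2$ iterability/genericity statement that holds uniformly in every $\mathbb{P}_{\max}$-extension -- and show it fails in $W$, e.g.\ by exhibiting a subset of $\omega_1$ in $W$ that provably cannot lie in any $\mathbb{P}_{\max}$-generic extension of $L(\mathbb{R})^W$.

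\textbf{Main obstacle.} The substantive work is squarely in (2): after constructing $W$, one must pin down a concrete consequence of $(*)$ that demonstrably fails in $W$, which requires analyzing how the iteration of Theorem \ref{FromSupercompact} interacts with the fine structure of $L(\mathbb{R})$ and its $\mathbb{P}_{\max}$-extensions. If the straightforward iteration does not by itself destroy $(*)$, a modification is needed -- for example, folding into the iteration a stationary set preserving step that adjoins a subset of $\omega_1$ incompatible with $\mathbb{P}_{\max}$-genericity, while taking care to preserve $\rm{MP}(\omega_1,\Gamma)$. The preservation side is the delicate point, since $\rm{MP}$ at $\omega_1$ for stationary set preserving forcings is not known to be preserved by arbitrary such steps, so the destructive step likely must be incorporated into the iteration itself rather than applied afterwards.
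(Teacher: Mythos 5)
For part (1) your route is different from the paper's and, granting the Asper\'o--Schindler theorem that $\sf{MM}^{++}$ implies $(*)$, it is correct: the model $W$ of Theorem \ref{MM++vsMP}(2) satisfies $\sf{MM}^{++}$ together with a proper class of Woodin cardinals, hence $(*)$, while $\rm{MP}(\omega_1,\Gamma)$ fails in it. The paper instead reuses the model $N[G]$ from the proof of Theorem \ref{NothingImpliesAnything}(2): $N[G]$ is a $\mathbb{P}_{\max}$ extension of a model of $\sf{AD}^+$ closed under $\M_1^\sharp$ but not under $\M_2^\sharp$, so $(*)$ holds there essentially by construction, while $\rm{MP}(\omega_1,\Gamma)$ must fail because (by the proof of Theorem \ref{gettingPD}, using the saturated ideal on $\omega_1$ present in $N[G]$) it would force closure under every $\M_n^\sharp$. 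The paper's argument is self-contained in the $\mathbb{P}_{\max}$ technology and needs far less consistency strength than a supercompact; yours is shorter but imports a much deeper external theorem. Either way, (1) is fine.

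Part (2) is where you have a genuine gap. You correctly identify what is needed --- a concrete, robust consequence of $(*)$ that can be made to fail in a model of $\rm{MP}(\omega_1,\Gamma)$ --- but you do not supply it, and you explicitly leave open both the choice of the ``$(*)$-killing'' statement and the preservation of $\rm{MP}$. The paper's key device is Larson's construction (Theorem~7.1 of \cite{MR2474445}): one arranges, by folding a coding forcing into the iteration, that there is a \emph{lightface definable wellordering of $H_{\omega_2}$ over $H_{\omega_2}$}. This outright contradicts $(*)$ by the homogeneity of $\mathbb{P}_{\max}$ (no such lightface wellordering can exist in a $\mathbb{P}_{\max}$ extension of a determinacy model), so no analysis of how the iteration interacts with the fine structure of $L(\mathbb{R})^W$ is required. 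The preservation issue you flag is handled exactly as you suspect it must be: the coding steps are interleaved with the book-keeping steps of the Theorem \ref{FromSupercompact} iteration (carried out over a ground model with proper class many strongly compact cardinals and a reflecting inaccessible), rather than applied afterwards, and the verification that $\rm{MP}(\omega_1,\Gamma)$ holds in the resulting model is the same as in Theorem \ref{FromSupercompact}. Without naming this (or some equally concrete) $(*)$-refuting device and building it into the iteration, your outline for (2) remains a plan rather than a proof.
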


\begin{definition}
\label{BoundedForcingAxioms}

Let $\Gamma$ define a class of forcing posets and $\kappa$ be an uncountable cardinal. $\sf{FA}_\kappa(\Gamma)$ is the following statement: for any $\mathbb{P}\in \Gamma$, let $\langle D_\alpha \ | \ \alpha<\omega_1 \rangle$ be a sequence of maximal antichains of r.o.$(\mathbb{P})$\footnote{r.o.$(\mathbb{P})$ is the completion of $\mathbb{P}$ and is a complete Boolean algebra.} such that for each $\alpha<\omega_1$, $|D_\alpha| \leq \kappa$, then there is a filter $G\subseteq$ r.o.$(\mathbb{P})$ such that $G\cap D_\alpha\neq \emptyset$ for all $\alpha<\omega_1$. \end{definition}

\begin{remark}
If $\kappa=\aleph_1$, then by \cite{Bagaria2000}, $\sf{FA}_\kappa(\Gamma)$ is equivalent to the statement: for any $\mathbb{P}\in \Gamma$, 
\begin{center}
$(H_{\omega_2},\in) \prec_{\Sigma_1} (H_{\omega_2}^{V^\mathbb{P}},\in)$.
\end{center}

\end{remark}

The following gives a characterization of bounded forcing axioms of the form $\sf{FA}_{\aleph_1}(\Gamma)$ in terms of maximality principles for $\Sigma_1$ statements.
\begin{theorem}\label{MPvsFA}
Let $\Gamma$ define a class of complete Boolean algebras. Then $\sf{FA}_{\aleph_1}(\Gamma)$ is equivalent to $\rm{MP}$$_{\Sigma_1}(\omega_1,\Gamma)$.
\end{theorem}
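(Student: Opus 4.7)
The plan is to invoke the Bagaria-style characterization of $\sf{FA}_{\aleph_1}(\Gamma)$ recorded in the remark preceding the theorem: $\sf{FA}_{\aleph_1}(\Gamma)$ is equivalent to the assertion that $(H_{\omega_2},\in)\prec_{\Sigma_1}(H_{\omega_2}^{V^\mathbb{P}},\in)$ for every $\mathbb{P}\in\Gamma$. Granted this, the theorem reduces to identifying $\rm{MP}_{\Sigma_1}(\omega_1,\Gamma)$ with $\Sigma_1$-generic absoluteness of $H_{\omega_2}$ under forcings in $\Gamma$. Before opening either direction I would simplify the hypothesis of the maximality principle: because $\Sigma_1$ formulas are upward absolute between transitive models, and assuming (as is implicit throughout the paper) that the trivial poset lies in $\Gamma$, the clause ``$\phi(A)$ is $\Gamma$-necessary over $V^\mathbb{P}$'' collapses to ``$V^\mathbb{P}\models\phi(A)$''. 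Thus $\rm{MP}_{\Sigma_1}(\omega_1,\Gamma)$ reads: for every $\Sigma_1$ formula $\phi$, every $A\subseteq\omega_1$, and every $\mathbb{P}\in\Gamma$, $V^\mathbb{P}\models\phi(A)$ implies $V\models\phi(A)$.

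For $\sf{FA}_{\aleph_1}(\Gamma)\Rightarrow\rm{MP}_{\Sigma_1}(\omega_1,\Gamma)$: suppose $V^\mathbb{P}\models\phi(A)$ with $\phi$ $\Sigma_1$, $A\subseteq\omega_1$, and $\mathbb{P}\in\Gamma$. Since $\sf{FA}_{\aleph_1}(\Gamma)$ forces every $\mathbb{P}\in\Gamma$ to preserve $\omega_1$, we have $A\in H_{\omega_2}^{V^\mathbb{P}}$. Taking $M\prec H_\lambda^{V^\mathbb{P}}$ of size $\aleph_1$ with $\omega_1\cup\{A\}\subseteq M$ and passing to the transitive collapse reflects any $\Sigma_1$-witness for $\phi(A)$ into $H_{\omega_2}^{V^\mathbb{P}}$, so $H_{\omega_2}^{V^\mathbb{P}}\models\phi(A)$. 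Bagaria's equivalence then gives $H_{\omega_2}^V\models\phi(A)$, and upward absoluteness yields $V\models\phi(A)$.

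For the converse $\rm{MP}_{\Sigma_1}(\omega_1,\Gamma)\Rightarrow\sf{FA}_{\aleph_1}(\Gamma)$: fix $\mathbb{P}\in\Gamma$, a $\Sigma_1$ formula $\phi$, and a parameter $A\in H_{\omega_2}^V$ with $H_{\omega_2}^{V^\mathbb{P}}\models\phi(A)$; upward absoluteness gives $V^\mathbb{P}\models\phi(A)$. Since $|\mathrm{trcl}(\{A\})|\leq\aleph_1$, fix in $V$ a bijection $\pi:\omega_1\to\mathrm{trcl}(\{A\})$ and code $(A,\pi)$ by a single $\tilde A\subseteq\omega_1$. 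Mostowski decoding is $\Sigma_1$, so ``$\phi$ holds of the element coded by $\tilde A$'' is a $\Sigma_1$ formula $\phi'(\tilde A)$, and $V^\mathbb{P}\models\phi'(\tilde A)$. The simplified form of $\rm{MP}_{\Sigma_1}(\omega_1,\Gamma)$ then gives $V\models\phi'(\tilde A)$, hence $V\models\phi(A)$, and one more application of $\Sigma_1$-reflection yields $H_{\omega_2}^V\models\phi(A)$, as required.

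The only genuinely technical step is $\Sigma_1$-reflection from $V$ (respectively $V^\mathbb{P}$) down to $H_{\omega_2}^V$ (respectively $H_{\omega_2}^{V^\mathbb{P}}$) for parameters whose transitive closure has cardinality at most $\aleph_1$; this is standard via the L\"owenheim--Skolem/transitive-collapse argument, and uses crucially that $\omega_1$ is preserved. The coding $A\mapsto\tilde A$ is the one point specific to the paper's formulation of $\rm{MP}(\omega_1,\Gamma)$ (with parameters subsets of $\omega_1$ rather than arbitrary elements of $H_{\omega_2}$), and I expect it to be the main bookkeeping obstacle rather than a conceptual one.
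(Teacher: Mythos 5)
Your proposal is correct and follows essentially the same route as the paper's proof: both directions go through the Bagaria characterization of $\sf{FA}_{\aleph_1}(\Gamma)$ as $\Sigma_1$-elementarity of $H_{\omega_2}$ in $H_{\omega_2}^{V^{\mathbb{P}}}$, combined with upward absoluteness of $\Sigma_1$ formulas (which, as you note, trivializes the $\Gamma$-necessity clause) and L\"owenheim--Skolem reflection of $\Sigma_1$ statements with parameters of hereditary size $\aleph_1$ into $H_{\omega_2}$. The only difference is that you make explicit two points the paper leaves implicit --- the coding of $H_{\omega_2}$-parameters by subsets of $\omega_1$ and the preservation of $\omega_1$ by posets in $\Gamma$ --- and both are handled correctly.
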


From Theorem \ref{MPvsFA}, we get the following.
\begin{corollary}
\label{boldfaceNMPfails}
Suppose $\Gamma$ defines the class of proper forcings, semi-proper forcings, or stationary set preserving posets. Then $\rm{NMP}$$(\omega_1,\Gamma)$ is false.
\end{corollary}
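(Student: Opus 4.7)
The strategy is to combine the equivalence in Theorem~\ref{MPvsFA} with the classical Todor\v{c}evi\'{c}--Veli\v{c}kovi\'{c} theorem that the bounded proper forcing axiom $\sf{BPFA} = \sf{FA}_{\aleph_1}(\mathrm{proper})$ implies $2^{\aleph_0}=\aleph_2$. I would argue by contradiction, assuming $\rm{NMP}(\omega_1,\Gamma)$ for one of the three choices of $\Gamma$ in the statement, and cook up a single forcing extension in which $\sf{FA}_{\aleph_1}(\Gamma)$ must hold while $2^{\aleph_0}$ is forced to be too large.

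First I would pick the witness: let $\mathbb{P}=\mathrm{Add}(\omega,\aleph_3)$, the Cohen forcing adding $\aleph_3$ many reals. Since $\mathbb{P}$ is ccc, it is proper, hence semi-proper, hence stationary set preserving, so $\mathbb{P}\in\Gamma$ in each of the three cases, and clearly $V^{\mathbb{P}}\models 2^{\aleph_0}\geq\aleph_3$. Next, the assumption $\rm{NMP}(\omega_1,\Gamma)$ gives that $\rm{MP}(\omega_1,\Gamma)$ holds in $V^{\mathbb{P}}$, so in particular $\rm{MP}_{\Sigma_1}(\omega_1,\Gamma)$ holds there. Applying Theorem~\ref{MPvsFA} inside $V^{\mathbb{P}}$ converts this into $\sf{FA}_{\aleph_1}(\Gamma)$ in $V^{\mathbb{P}}$, which, since the class of proper forcings is contained in $\Gamma$ in each of the three cases, implies $\sf{BPFA}$ in $V^{\mathbb{P}}$.

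The contradiction then follows immediately: $\sf{BPFA}$ entails $2^{\aleph_0}=\aleph_2$ by Todor\v{c}evi\'{c}--Veli\v{c}kovi\'{c}, which is incompatible with $V^{\mathbb{P}}\models 2^{\aleph_0}\geq\aleph_3$. Hence $\rm{NMP}(\omega_1,\Gamma)$ must fail. There is no serious obstacle here beyond quoting the Todor\v{c}evi\'{c}--Veli\v{c}kovi\'{c} bound as a black box; the one bookkeeping point where one could imagine a hiccup is that Theorem~\ref{MPvsFA} is stated for classes of complete Boolean algebras, but this is handled in the standard way by identifying each forcing with its regular-open completion, as was already done implicitly in the definition of $\sf{FA}_\kappa(\Gamma)$.
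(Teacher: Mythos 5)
Your argument is correct and follows essentially the same strategy as the paper's proof: assume $\rm{NMP}(\omega_1,\Gamma)$, pass to a generic extension by a single well-chosen $\mathbb{P}\in\Gamma$, use Theorem~\ref{MPvsFA} there to extract $\sf{FA}_{\aleph_1}(\Gamma)$ and hence $\sf{BPFA}$, and contradict a known constraint that $\sf{BPFA}$ places on the continuum. The only real difference is the witness: the paper forces with the $\sigma$-closed poset adding a Cohen subset of $\omega_1$, so that $\sf{CH}$ holds in the extension, and needs only the comparatively light fact that $\sf{BPFA}$ refutes $\sf{CH}$; you force with $\mathrm{Add}(\omega,\aleph_3)$ and must invoke the stronger theorem that $\sf{BPFA}$ pins the continuum at exactly $\aleph_2$ (this upper bound is due to Moore, cf.\ \cite{moore2005set}, rather than Todor\v{c}evi\'{c}--Veli\v{c}kovi\'{c}, so the attribution should be adjusted, though the fact itself is standard). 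Both versions go through; the paper's choice just leans on a weaker black box.
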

\begin{proof}
Let $\Gamma$ define the class of proper forcings (the proof is the same for the other classes). By standard results (cf. \cite{moore2005set}), $\sf{FA}$$_{\aleph_1}(\Gamma)$ (commonly known as $\sf{BPFA}$) implies $\sf{CH}$ fails. Let $\mathbb{P}$ be the ($\sigma$-closed, hence proper) forcing that adds a Cohen subset of $\omega_1$. Then in $V^{\mathbb{P}}$, $\sf{CH}$ holds and NMP$(\omega_1,\Gamma)$ holds. In particular, by Theorem \ref{MPvsFA}, $\sf{FA}_{\aleph_1}(\Gamma)$ holds in $V^\mathbb{P}$. But then $\sf{CH}$ fails in $V^\mathbb{P}$. Contradiction. 
\end{proof}

We remark that if $\kappa\geq \omega_2$ then $\rm{MP}(\kappa,\Gamma)$ may fail. For instance, let $\Gamma$ define the class of proper forcings and $\kappa=\aleph_2 = 2^{\aleph_0}$ (by Theorem \ref{MPvsFA} and \cite{moore2005set}). Let $A\subseteq \kappa$ code the reals and consider the statement $\phi(A) \equiv$ ``there is a real $x\notin A$". Obviously, if $\mathbb{P}$ is the Cohen forcing that adds a Cohen real, then $\phi(A)$ can be made $\Gamma$-necessary over $V^\mathbb{P}$ but cannot be true in $V$ by the definition of $A$.

In Section \ref{Relationships}, we prove Theorems \ref{MM++vsMP}, \ref{NothingImpliesAnything}, \ref{MM++vsStar}, and \ref{MPvsFA}.

The following theorem suggests that when $\Gamma$ defines the class of stationary set preserving posets, the principles $\rm{MP}(\omega,\Gamma)$ and $\rm{MP}(\omega_1,\Gamma)$ may have considerable consistency strength.
\begin{theorem}
\label{BMM}
If $\Gamma$ is the class of stationary set preserving forcings, then $\rm{MP}(\omega_1,\Gamma)$ implies $2^{\aleph_0}= 2^{\aleph_1}=\aleph_2$ and for all $X$, $X^\sharp$ exists. In fact, the second clause follows from $\rm{MP}(\omega,\Gamma)$. Furthermore, $\sf{MM}(c)$ does not imply $\rm{MP}(\omega_1,\Gamma)$.
\end{theorem}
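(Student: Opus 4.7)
I address the three assertions of the theorem in turn.

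For the first, Theorem~\ref{MPvsFA} gives $\rm{MP}(\omega_1,\Gamma)\Rightarrow\rm{MP}_{\Sigma_1}(\omega_1,\Gamma)\Leftrightarrow\sf{BMM}$, from which Moore's theorem yields $2^{\aleph_0}=\aleph_2$. For $2^{\aleph_1}=\aleph_2$, given any $A\subseteq\omega_1$, I apply $\rm{MP}(\omega_1,\Gamma)$ with parameter $A$ to a $\Sigma_2$-statement $\phi(A)$ asserting the existence of an ordinal $\alpha<\omega_2$ coding $A$ under a canonical enumeration of $P(\omega_1)$. The $\sigma$-closed (hence ssp) collapse $\text{Coll}(\omega_1,2^{\aleph_1})$ witnesses that $\phi(A)$ is forcibly ssp-necessary: in the extension $2^{\aleph_1}=\aleph_2$ holds, a code for $A$ exists, and the statement persists in further ssp extensions since $\omega_2$ only grows. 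MP then gives $\phi(A)$ in $V$, whence $|P(\omega_1)|\leq\aleph_2$.

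For the second assertion, fix a real $X$ and suppose for contradiction that $X^\sharp$ does not exist. Then the Jensen--Dodd covering lemma for $L[X]$ applies in $V$ and in every forcing extension; in particular, $\omega_2^V$ is regular in $L[X]$. For each countable ordinal $\alpha$, coded by a real $r_\alpha\in V$, consider the statement $\phi_\alpha(X):=$``$\omega_2>\omega_\alpha^{L[X]}$''. The $\sigma$-closed Levy collapse $\text{Coll}(\omega_1,\omega_\alpha^V)$ is ssp and makes $\phi_\alpha(X)$ true, and the statement is upward absolute under further ssp forcing, so it is ssp-necessary over the collapse extension. Applying $\rm{MP}(\omega,\Gamma)$ with parameter $(X,r_\alpha)$ yields $\omega_2^V>\omega_\alpha^{L[X]}$ in $V$ for every countable $\alpha$. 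By using $\omega_1^V$ as a \emph{definable} term inside the formula (not as a parameter), the same argument extends to ordinals of the form $\omega_1+\alpha$, $\omega_1\cdot\alpha$, and more generally to a cofinal family of ordinals below $\omega_2^V$. Hence $\omega_2^V$ is a limit $L[X]$-cardinal; combined with regularity in $L[X]$, it is weakly inaccessible there and thus, by GCH in $L[X]$, strongly inaccessible. Silver's theorem then gives $X^\sharp$---a contradiction.

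For the third assertion, I produce a model of $\sf{MM}(c)$ in which $2^{\aleph_1}>\aleph_2$, negating the first assertion and hence $\rm{MP}(\omega_1,\Gamma)$. Starting from a model of $\sf{MM}$ (augmented by suitable large cardinals), one forces with a cardinal-preserving iteration that adds many new subsets of $\omega_1$ so that $2^{\aleph_1}>\aleph_2$ in the extension, while preserving $\sf{MM}(c)$---feasible because $\sf{MM}(c)$ constrains only forcings of size $\leq c=2^{\aleph_0}$, and the continuum can be arranged to remain small enough.

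The main obstacle is the second assertion: iterating MP past ordinals of $V$-cardinality $\omega_1$ requires the target ordinals to be expressible as terms definable in $V$ from $X$ and countable parameters, since such ordinals are not themselves coded by reals. The delicate point is arranging the resulting family of indices $\beta$ with $\omega_2^V>\omega_\beta^{L[X]}$ so that it is cofinal in $\omega_2^V$, ensuring $\omega_2^V$ is genuinely a limit $L[X]$-cardinal rather than a successor $\lambda^{+L[X]}$ for some unreachable $\lambda$.
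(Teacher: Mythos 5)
Your derivation of the first clause is essentially right in outline (the paper does exactly this: Theorem \ref{MPvsFA} gives $\sf{BMM}$, and \cite{todorcevic2002generic} gives $2^{\aleph_0}=2^{\aleph_1}=\aleph_2$), but your hand-made argument for $2^{\aleph_1}=\aleph_2$ does not work as stated: there is no ``canonical enumeration of $P(\omega_1)$'' that is absolute between $V$ and its stationary set preserving extensions --- such forcings add new subsets of $\omega_1$, so the putative code for $A$ is not preserved and the statement you wrote is not $\Gamma$-necessary. This is harmless only because the conclusion is already part of the cited consequence of $\sf{BMM}$.

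The second clause is where the proposal genuinely fails. Your strategy is to show that $\omega_2^V$ is (strongly) inaccessible in $L[X]$ and then invoke ``Silver's theorem'' to conclude $X^\sharp$ exists. That last step is false: the existence of an ordinal that is inaccessible in $L[X]$ is strictly weaker in consistency strength than $X^\sharp$ and is outright consistent with $\neg X^\sharp$ --- for instance, start with $L$ containing an inaccessible $\kappa$ and force with $\mathrm{Coll}(\omega_1,{<}\kappa)$; then $\omega_2^V=\kappa$ is inaccessible in $L$ but $0^\sharp$ still does not exist, since set forcing cannot create sharps. (Even the preliminary step, that $\omega_2^V$ is a \emph{limit} cardinal of $L[X]$, is not established; you acknowledge yourself that the family of indices you can reach is not obviously cofinal in $\omega_2^V$.) The paper's argument is of a completely different character: assuming $\neg X^\sharp$, one passes (via \cite{CodingIntoK}) to a stationary set preserving extension in which $X$ is coded by a real $a$ coding a \emph{reshaped} subset of $\omega_1$; the existence of such an $a$ inside a transitive model is a $\Sigma_1$ statement that is $\Gamma$-necessary, so $\rm{MP}(\omega,\Gamma)$ pulls it back to $V$; then \cite[Lemma 3.3]{schindler2004semi} produces, in a further stationary set preserving extension, a reshaped code $b$ with $f_b<^*f_a$, and iterating yields an infinite $<^*$-descending sequence, which is absurd.

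The third clause also fails. You propose a model of $\sf{MM}(c)$ with $2^{\aleph_1}>\aleph_2$, but no such model exists: $\sf{MM}(c)$ allows one to meet $\aleph_1$-many dense sets, and for each maximal antichain $D_\alpha$ of $\mathrm{r.o.}(\mathbb{P})$ of size $\le\aleph_1$ the set of $p\in\mathbb{P}$ lying below some member of $D_\alpha$ is dense, so $\sf{MM}(c)$ implies $\sf{FA}_{\aleph_1}(\Gamma)=\sf{BMM}$, which by the very result you cite in clause one forces $2^{\aleph_1}=\aleph_2$. (The claim that one can add many subsets of $\omega_1$ ``while preserving $\sf{MM}(c)$'' is also unsubstantiated.) The paper instead separates $\sf{MM}(c)$ from $\rm{MP}(\omega_1,\Gamma)$ by building a model of $\sf{MM}(c)$ of the form $L[X]$: force with $\mathbb{P}_{\rm max}$ followed by $\mathrm{Coll}(\Theta,\wp(\mathbb{R}))$ over a model of $\sf{AD}_{\mathbb{R}}+\text{``}\Theta$ is regular''; in $L[X]$ the sharp of $X$ cannot exist, so by the second clause $\rm{MP}(\omega_1,\Gamma)$ fails there.
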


When combined with other mild assumptions, $\rm{MP}(\kappa,\Gamma)$ can have significant lower-bound consistency strength for various classes of forcings $\Gamma$. For instance, we have the following.
\begin{theorem}
\label{gettingPD}
Suppose $\Gamma$ defines the class of $\sigma$-closed, proper forcing posets, semi-proper forcing posets, or stationary set preserving forcing posets. Suppose $\rm{MP}(\omega_1,\Gamma)$ holds and there is a precipitous ideal on $\omega_1$. Then Projective Determinacy $(\sf{PD})$ holds.
\end{theorem}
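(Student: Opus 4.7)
The plan is a core model induction (CMI). Assume toward contradiction that $\sf{PD}$ fails; then by the Martin--Steel--Woodin equivalence of $\sf{PD}$ with the existence of $M_n^{\#}(x)$ for every $n<\omega$ and every real $x$, there is a least $n$ and a real $x$ for which $M_n^{\#}(x)$ fails to exist. The goal is to contradict this choice of $n$.

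The first step is to extract a bounded forcing axiom from $\rm{MP}(\omega_1,\Gamma)$. By Theorem \ref{MPvsFA}, $\rm{MP}_{\Sigma_1}(\omega_1,\Gamma)$ is equivalent to $\sf{FA}_{\aleph_1}(\Gamma)$; by the remark following Definition \ref{BoundedForcingAxioms}, this amounts to $\Sigma_1$-elementarity of $H_{\omega_2}^V$ in $H_{\omega_2}^{V^{\mathbb{P}}}$ for each $\mathbb{P}\in\Gamma$. In the stationary-preserving case this is $\sf{BMM}$, and Theorem \ref{BMM} then already yields that $X^{\#}$ exists for every set $X$; in the other three cases we at least obtain $\Sigma_1$-absoluteness of $H_{\omega_2}$ between $V$ and its $\Gamma$-generic extensions, which is the engine for transferring existence results from such extensions into $V$.

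The second step is to exploit the precipitous ideal $I$ on $\omega_1$: forcing with $\mathbb{Q}=\mathcal{P}(\omega_1)/I$ yields, in the generic extension $V[G]$, a generic ultrapower embedding $j:V\to M$ with $\operatorname{crit}(j)=\omega_1$ and $M$ transitive. I would then run the CMI: build the Jensen--Steel core model $K$ below $M_n^{\#}$ in $V$; by the bounded forcing axiom, $K$ is correctly computed and absolute between $V$ and its $\Gamma$-generic extensions. The embedding $j$ restricts to $j\upharpoonright K : K\to K^M$, producing in $V[G]$ a normal ultrafilter on $\omega_1^V$ concentrating on $K$. Combining this with the inductive hypothesis (that $M_k^{\#}(y)$ exists for all $k<n$ and all reals $y$) and pushing through the reflection provided by $\sf{FA}_{\aleph_1}(\Gamma)$, one shows that $M_n^{\#}(x)$ must exist in $V$ after all, contradicting the minimality of $n$.

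The main obstacle is uniformity across the four classes of $\Gamma$. For $\sigma$-closed $\Gamma$, the bounded forcing axiom is extremely weak --- consistent with $V=L$ by \cite{hamkins2000simple} --- so essentially all the strength must come from the precipitous ideal and its interaction with $K$; by contrast, for stationary-preserving $\Gamma$ the bounded FA is substantial and $X^{\#}$ is already available from Theorem \ref{BMM}. A further delicate point is that $\mathbb{Q}=\mathcal{P}(\omega_1)/I$ need not itself belong to $\Gamma$, so to apply $\rm{MP}(\omega_1,\Gamma)$-reflection to statements true in $V[G]$ one must first absorb $\mathbb{Q}$ into a forcing in $\Gamma$ (e.g.\ via a $\sigma$-closed absorption, or a Shelah-style absorption into a stationary-preserving forcing). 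Making the absorption, the iterability verification for the candidate mice, and the core model machinery all go through simultaneously for all four classes is the main technical hurdle.
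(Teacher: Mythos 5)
Your overall architecture (induction on $n$, the core model $K$ below $\M_{n+1}^\sharp$, the generic ultrapower from the precipitous ideal, and reflection via the maximality principle) is in the right spirit, but the step where you actually produce $\M_{n+1}^\sharp$ has a genuine gap. The ``normal ultrafilter on $\omega_1^V$ concentrating on $K$'' derived from $j\restriction K$ can at best witness one more measurable; it cannot produce an inner model with $n+1$ Woodin cardinals, so it cannot contradict the non-existence of $\M_{n+1}^\sharp(x)$. The mechanism the paper uses instead is \emph{weak covering}: by the Claverie--Schindler result (Theorem \ref{weakCovering}), if $V$ is closed under the $\M_n^\sharp$-operator, there is a precipitous ideal on $\omega_1$, and there is no inner model with $n+1$ Woodins, then $K$ exists and $(\omega_1^+)^K=\omega_2^V$. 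One then forces with $\mathbb{P}=\mathrm{Coll}(\omega_1,\omega_2)$ --- which is $\sigma$-closed and hence lies in all four classes $\Gamma$ --- and observes that the statement ``$(\omega_1^+)^K<\omega_2$'' (expressed from a subset of $\omega_1$ coding $K||\omega_1$, using the local definability and generic absoluteness of $K$) is $\Gamma$-necessary over $V^{\mathbb{P}}$. Then $\rm{MP}(\omega_1,\Gamma)$ makes it true in $V$, contradicting weak covering. This is where all the strength enters, uniformly for all four classes; your worry that the $\sigma$-closed case leaves only a very weak bounded forcing axiom is resolved by applying $\rm{MP}$ directly to collapse forcings rather than routing everything through $\sf{FA}_{\aleph_1}(\Gamma)$ (indeed the paper notes that $\rm{MP}(0,\Gamma)$ essentially suffices).

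A second structural point you are missing is the three-stage bootstrap needed to keep the induction running: the covering theorem requires all of $V$, not just the reals, to be closed under $\M_n^\sharp$. The paper proves $(1)_k$ ($H_{\omega_1}$ is closed under $\M_k^\sharp$), then $(2)_k$ ($H_{\omega_2}$ is closed), then $(3)_k$ ($V$ is closed). The step from $(1)_k$ to $(2)_k$ uses the generic embedding $j:V\to M$: a set $A\subseteq\omega_1$ lands in $H_{\omega_1}^M$, so $M$ builds $\M_k^\sharp(A)$, and an iterability-absoluteness lemma (via $\Q$-structures supplied by the $\M_{k-1}^\sharp$-operator) together with homogeneity of a collapse pulls the mouse back into $V$. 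The step from $(2)_k$ to $(3)_k$ again applies $\rm{MP}$ to $\mathrm{Coll}(\omega_1,A)$. Note that $\rm{MP}$ is never applied to $\mathcal{P}(\omega_1)/\mathcal{I}$ itself, so the absorption problem you flag does not arise.
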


The proofs of Theorems \ref{BMM} and \ref{gettingPD} will be given in Section \ref{Lowerbounds}. In Section \ref{OpenProblems}, we list some related open problems and questions.

There have been several recent results on generic absoluteness closely related to this work. The reader can see, for instance, \cite{MR3194674}, \cite{MR3377357}, and \cite{MR3486170}.

\textbf{Acknowledgement.} The paper was written during the second author's visit to Kobe University, where the first author was a postdoctoral researcher in May 2014 and completed during the first author's visit to UC Irvine, where the second author is a Visiting Assistant Professor, in August 2016. The first author would like to thank Toshimichi Usuba for many comments and discussions on this topic. He is also grateful for JSPS for support through the grants with JSPS KAKENHI Grant Number 14J02269 and 15K17586. The second author would like to thank the NSF for its generous support through grant DMS-1565808. Finally, we would like to thank the anonymous referee for several helpful comments regarding the content of the paper.

\section{UPPER-BOUND CONSISTENCY STRENGTH}\label{UpperBounds}
In this section, we prove Theorems \ref{FromReflectingCard} and \ref{FromSupercompact}. We start with the proof of Theorem \ref{FromReflectingCard}.
\begin{proof}[Proof of Theorem \ref{FromReflectingCard}]

We now prove (1). We let $\mathbb{P} = \langle \mathbb{P}_\alpha, \mathbb{Q}_\beta\ | \ \alpha\leq 2^\omega \wedge \beta<2^\omega \rangle$ be a countable support iteration of $\sigma$-closed forcings defined as follows. Since $\sigma$-closed forcings don't add reals, we let $\langle (\phi_\alpha,x_\alpha) \ | \  \alpha < 2^\omega\rangle$ enumerate (with unbounded repetition) all pairs $(\phi,x)$, where 
\begin{enumerate}[(i)]
\item $x\in \mathbb{R}$, and
\item $\phi$ is a sentence in the forcing language of $\mathbb{P}$ with parameter $x$.
\end{enumerate} 
By induction, for each $\alpha<2^\omega$, if $(\varphi_\alpha,x_\alpha)$ is such that $\varphi_\alpha(x_\alpha)$ is forcibly $\Gamma$-necessary over $V^{\mathbb{P}_\alpha}$ then we choose $\dot{\mathbb{Q}}_\alpha$ so that $\phi_\alpha(x_\alpha)$ is $\Gamma$-necessary over $V^{\mathbb{P}_\alpha\ast \dot{\mathbb{Q}}_\alpha}$. Note that $\mathbb{P}$ is $\sigma$-closed, and hence $\mathbb{R}^V = \mathbb{R}^{V^{\mathbb{P}}}$.

We claim that $V^\mathbb{P}\vDash \rm{NMP}(\omega,\Gamma)$. So let $G\subseteq \mathbb{P}$ be $V$-generic. Let $\alpha<(2^\omega)^V$ and $\mathbb{Q}\in V[G]$ be such that $\varphi_\alpha(x_\alpha)$ is forcibly $\Gamma$-necessary over $V[G]^{\mathbb{Q}}$. We want to show that $V[G]^{\mathbb{Q}} \vDash \varphi_\alpha(x_\alpha)$. By the assumption, $\varphi_\alpha(x_\alpha)$ is $\Gamma$-forcibly necessary over $V[G\rest \alpha]$. By construction, $\varphi_\alpha(x_\alpha)$ is $\Gamma$-necessary over $V[G\rest(\alpha+1)]$, which in turns implies $\varphi_\alpha(x_\alpha)$ holds $V[G]^{\mathbb{Q}}$ as desired.

Part (2) essentially follows from Usuba \cite{Usuba}. In \cite{Usuba}, assuming in $V$ that there is a hyper-huge cardinal, it is shown that the generic mantle $g\mathbb{M}$ is the mantle $\mathbb{M}$ (hence, $\mathbb{M}$ is generically invariant) and that $\mathbb{M}$ is a ground of $V$.\footnote{Recall the mantle $\mathbb{M}$ is the intersection of all grounds of $V$. The generic mantle $g\mathbb{M}$ is the intersection of all grounds of all set generic extensions of $V$.} 
\begin{claim}
NMP$(\omega_1,\Gamma)$ ($\Gamma$ is the class of $\sigma$-closed forcings) implies that for a proper class of $\alpha$, $\alpha^+ > (\alpha^+)^{g\mathbb{M}}$.
\end{claim}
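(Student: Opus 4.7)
The plan is to produce, for each strong limit cardinal $\alpha$ of uncountable cofinality in $V$, a witness that $\alpha^+ > (\alpha^+)^{g\mathbb{M}}$. Since such cardinals form a proper class, this will prove the claim. The strategy is to collapse $\alpha$ to have size $\omega_1$, apply $\rm{MP}(\omega_1,\Gamma)$ in the resulting $\sigma$-closed extension (available via $\rm{NMP}(\omega_1,\Gamma)$) to a formula expressing the desired inequality with parameter a code of $\alpha$ on $\omega_1$, and then translate back to $V$ using Usuba's generic invariance of $g\mathbb{M}$.

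Fix such an $\alpha$ and force over $V$ with $\mathbb{P}=\mathrm{Coll}(\omega_1,\alpha)$. Because $\alpha$ is a strong limit of uncountable cofinality, $\alpha^{\aleph_0}=\alpha$, so $|\mathbb{P}|=\alpha$ and $\mathbb{P}$ is $\alpha^+$-cc; being $\sigma$-closed, it preserves $\omega_1$, collapses every $V$-cardinal in $(\omega_1,\alpha]$ to $\omega_1$, and yields $\omega_2^{V[G]}=(\alpha^+)^{V[G]}=(\alpha^+)^V$. In $V[G]$ fix $A\subseteq \omega_1$ coding a well-order of $\omega_1$ of order type $\alpha$. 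By $\rm{NMP}(\omega_1,\Gamma)$ in $V$, $\rm{MP}(\omega_1,\Gamma)$ holds in $V[G]$.

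Let $\phi(A)$ assert: ``letting $\beta$ be the ordinal coded by $A$, $\beta^+ > (\beta^+)^{g\mathbb{M}}$''. The next step is to show $\phi(A)$ is forcibly $\Gamma$-necessary over $V[G]$, witnessed by $\mathbb{Q}=\mathrm{Coll}(\omega_1,(\alpha^+)^V)$. In $V[G][H]$, $(\alpha^+)^V$ is collapsed, so $|\alpha|^{V[G][H]}=\omega_1$ and $(\alpha^+)^{V[G][H]}=\omega_2^{V[G][H]}>(\alpha^+)^V$, while by the generic invariance of $g\mathbb{M}$ we have $g\mathbb{M}^{V[G][H]}=g\mathbb{M}^V$, so $(\alpha^+)^{g\mathbb{M}^{V[G][H]}}=(\alpha^+)^{g\mathbb{M}^V}\leq(\alpha^+)^V<(\alpha^+)^{V[G][H]}$, giving $\phi(A)$. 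For any further $\sigma$-closed extension $V[G][H][H']$, $\omega_1$ is preserved, so $|\alpha|$ remains $\omega_1$ and hence $(\alpha^+)^{V[G][H][H']}\geq\omega_2^{V[G][H]}>(\alpha^+)^V\geq(\alpha^+)^{g\mathbb{M}^{V[G][H][H']}}$, so $\phi(A)$ persists.

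Applying $\rm{MP}(\omega_1,\Gamma)$ in $V[G]$ to $\phi(A)$, we conclude $\phi(A)$ holds in $V[G]$, i.e., $(\alpha^+)^{V[G]}>(\alpha^+)^{g\mathbb{M}^{V[G]}}$. Since $(\alpha^+)^{V[G]}=(\alpha^+)^V$ by the chain condition on $\mathbb{P}$, $(\alpha^+)^{g\mathbb{M}^{V[G]}}=(\alpha^+)^{g\mathbb{M}^V}$ by generic invariance of $g\mathbb{M}$, and strict inequality between two $V$-ordinals is absolute, we obtain $(\alpha^+)^V>(\alpha^+)^{g\mathbb{M}^V}$ in $V$. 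The main delicate point is the verification that $\phi(A)$ is $\Gamma$-necessary across all further $\sigma$-closed extensions; this depends crucially on $\omega_1$-preservation of $\sigma$-closed forcing (so that $|\alpha|$ is permanently $\omega_1$) combined with Usuba's invariance of $g\mathbb{M}$ under set forcing.
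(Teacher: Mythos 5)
Your proposal is correct and follows essentially the same route as the paper's proof: collapse a suitable $\alpha$ (with $\alpha^{\aleph_0}=\alpha$, so $\mathrm{Coll}(\omega_1,\alpha)$ is $\alpha^+$-cc) to $\omega_1$, code $\alpha$ by $A\subseteq\omega_1$, use a further collapse of $(\alpha^+)^V$ to make the statement ``$(\alpha^+)^{g\mathbb{M}}<\alpha^+$'' $\Gamma$-necessary, and invoke $\rm{MP}(\omega_1,\Gamma)$ in $V[G]$ together with Usuba's generic invariance of $g\mathbb{M}$. The only differences are cosmetic: you argue directly where the paper argues by contradiction (collapsing $\kappa=(\alpha^+)^{g\mathbb{M}}$, which equals $(\alpha^+)^V$ under its contradiction hypothesis), and you restrict to strong limits of uncountable cofinality, which still form a proper class.
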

\begin{proof}
The argument is as follows. Fix an $\alpha\geq \omega_1^V$, $\alpha^\omega\subseteq \alpha$, and let $\kappa=(\alpha^+)^{g\mathbb{M}}$. Suppose for contradiction that $\kappa=\alpha^+$. Let $\mathbb{P}= Coll(\omega_1,\alpha)$, so $\mathbb{P}\in\Gamma$. Let $G\subseteq \mathbb{P}$ be $V$-generic. In $V[G]$, $\kappa=(\alpha^+)^{g\mathbb{M}} = \alpha^+$; this is because by our choice of $\alpha$, $\mathbb{P}$ is $\alpha^+$-cc. Let $A\subset \omega_1$ code $\alpha$ and $\mathbb{Q} = Coll(\omega_1,\kappa)$ and $H\subseteq \mathbb{Q}$ be $V[G]$-generic. Note that $\mathbb{P} * \mathbb{Q}\in\Gamma$ and the statement ``$(\alpha^+)^{g\mathbb{M}}<\omega_2$" is of the form $\phi(A)$ and is $\Gamma$-necessary over $V[G][H]$. By NMP$(\omega_1,\Gamma)$, $\phi(A)$ is true in $V[G]$. This is a contradiction. 
\end{proof}

The conclusion of the claim contradicts the fact that $\mathbb{M}$ is a ground of $V$. This is because $V$ is a set-generic extension of $\mathbb{M} = g\mathbb{M}$; and hence there is a cardinal $\beta$ such that for all $\alpha\geq \beta$, $(\alpha^+)^{g\mathbb{M}} = \alpha^+$.
\end{proof} 

\begin{remark}
In fact, Usuba \cite{Usuba} has shown that letting $\Gamma$ be the class of all forcing posets, if NMP$(\omega,\Gamma)$ holds, then there cannot exist a hyper-huge cardinal. Therefore, Woodin's model of NMP$(\omega,\Gamma)$ cannot accommodate hyper-huge cardinals. The proof of Theorem \ref{FromReflectingCard} is based on Usuba's work in \cite{Usuba} and one can use it to reproduce Usuba's aforementioned result.
\end{remark}

\begin{proof}[Proof of Theorem \ref{FromSupercompact}]

Let $\Gamma$ define the class of all stationary set preserving forcings and $\delta$ be an inaccessible cardinal which is reflecting. Note that $\delta$ is a limit of strongly compact cardinals because there are proper class many strongly compact cardinals and $\delta$ is reflecting. We shall show that there is a semi-proper poset $\mathbb{P}$ of size $\delta$ such that $\text{MP}(\omega_1, \Gamma)$ holds in $V^{\mathbb{P}}$. 

We will construct a revised countable support forcing iteration $(\mathbb{P}_{\alpha} , \dot{\mathbb{Q}}_{\alpha} \mid \alpha \le \delta )$ of semi-proper forcings and a sequence $(x_{\alpha} \in \mathcal{P}(\omega_1)^{V^{\mathbb{P}_{\alpha}}} \mid \alpha < \delta)$ with the following properties:
\begin{enumerate}
\item each $\mathbb{P}_{\alpha}$ is in $V_{\delta}$, 

\item any subset of $\omega_1$ in $V^{\mathbb{P}_{\delta}}$ is of the form $x_{\alpha}$ for some $\alpha < \delta$, 

\item for any $\alpha < \delta$, in $V^{\mathbb{P}_{\alpha}}$, $\dot{\mathbb{Q}}_{\alpha}$ is of the form $\text{Coll}(\omega_1 , < \kappa) * \mathbb{Q}$ where $\kappa$ is a strongly compact cardinal less than $\delta$, and

\item for any formula $\phi$ and any $\alpha < \delta$, if $\phi [x_{\alpha}]$ is $\Gamma$-necessary in $V^{\mathbb{P}_{\alpha} * \text{Coll}(\omega_1 , < \kappa) * \mathbb{Q}'}$ for some stationary set preserving poset $\mathbb{Q}'$ in $V^{\mathbb{P}_{\alpha} * \text{Coll}(\omega_1 , < \kappa)}$ where $\dot{\mathbb{Q}}_{\alpha} = \text{Coll}(\omega_1 , < \kappa) * \mathbb{Q}$ for some $\mathbb{Q}$, then $\phi [x_{\alpha}]$ is $\Gamma$-necessary in $V^{\mathbb{P}_{\alpha + 1}}$.
\end{enumerate}

The item 2. can be organized by a standard book-keeping argument. For item 4., we use the assumption that $\delta$ is reflecting so that the poset $\mathbb{Q}$ like a $\mathbb{Q}'$ in the item 4. can be taken in $V_{\delta}^{V^{\mathbb{P}_{\alpha} * \text{Coll}(\omega_1 , < \kappa)}}$ and that one can set $\dot{\mathbb{Q}}_{\alpha} = \text{Coll}(\omega_1, < \kappa) * \mathbb{Q}$ in $V^{\mathbb{P}_{\alpha}}$. Note that the poset $\mathbb{Q}$ is also semi-proper in $V^{\mathbb{P}_{\alpha} * \text{Coll}(\omega_1 , < \kappa)}$ because $\kappa$ is strongly compact in $V^{\mathbb{P}_{\alpha}}$ and so every stationary set preserving forcing is semi-proper in $V^{\mathbb{P}_{\alpha} * \text{Coll}(\omega_1 , < \kappa)}$ by the result of Shelah in \cite[Chapter~XIII, Section~1]{Shelah}.

Let $\mathbb{P} = \mathbb{P}_{\delta}$. We now argue that $\text{MP}(\omega_1, \Gamma)$ holds in $V^{\mathbb{P}}$. Let $\phi$ be a fomula and $x$ be a subset of $\omega_1$ in $V^{\mathbb{P}}$ such that $\phi [x]$ is $\Gamma$-necessary in $V^{\mathbb{P} * \mathbb{Q}}$ for some stationary set preserving $\mathbb{Q}$ in $V^{\mathbb{P}}$. We will show that $\phi [x]$ holds in $V^{\mathbb{P}}$. By the item 2. above, there is an $\alpha < \delta$ such that $x = x_{\alpha}$ in $V^{\mathbb{P}_{\alpha}}$. Since $\mathbb{P} / \mathbb{P}_{\alpha}$ is semi-proper in $V^{\mathbb{P}_{\alpha}}$, $(\mathbb{P} / \mathbb{P}_{\alpha}) * \mathbb{Q}$ forces that $\phi [x_{\alpha}]$ is $\Gamma$-necessary. So by the item 4., $\phi [x_{\alpha}]$ is $\Gamma$-necessary in $V^{\mathbb{P}_{\alpha +1}}$. Since $\mathbb{P}/\mathbb{P}_{\alpha +1}$ is stationary set preserving in $V^{\mathbb{P}_{\alpha +1}}$, $\phi [x]$ holds in $V^{\mathbb{P}}$, as desired.

\end{proof}

\begin{remark}
The proof above can be modified to show that the axioms $\sf{MM}^{++}$, MP$(\omega_1,\Gamma)$ may be jointly consistent. Assuming even stronger large cardinal properties, a variation of the construction above and \cite[Theorem 3.10]{iterated_resurrection} imply that $\sf{MM}^{++}$, MP$(\omega_1,\Gamma)$, and the iteration resurrection axioms RA$_{\sf{ON}}(\sf{SSP})$ (cf. \cite{iterated_resurrection}) may all be jointly consistent.
\end{remark}

\section{RELATIONSHIP WITH FORCING AXIOMS}\label{Relationships}

In this section, we will prove Theorems \ref{MPvsFA}, \ref{NothingImpliesAnything},\ref{MM++vsMP}, and \ref{MM++vsStar}.

\begin{proof}[Proof of Theorem \ref{MPvsFA}]

Assume $\rm{MP}$$_{\Sigma_1}(\omega_1,\Gamma)$ and let $\mathbb{P}\in \Gamma$. We show that 
\begin{center}
$(H_{\omega_2},\in)\prec_{\Sigma_1}(H_{\omega_2}^{V^\mathbb{P}},\in)$.
\end{center}
Suppose $y\in H_{\omega_2}$, $\psi(v_0,v_1)$ is a $\Sigma_0$ formula. Clearly, if $(H_{\omega_2},\in)\vDash \exists x \psi[x,y]$, then  $(H_{\omega_2}^{V^\mathbb{P}},\in)\vDash \exists x \psi[x,y]$. So suppose $(H_{\omega_2}^{V^\mathbb{P}},\in)\vDash \exists x \psi[x,y]$. Then for any $\mathbb{Q}\in \Gamma^{V^\mathbb{P}}$, we have
\begin{center}
$(H_{\omega_2}^{V^{\mathbb{P}\ast \mathbb{Q}}},\in)\vDash \exists x \psi[x,y]$.
\end{center}
This means the $\Sigma_1$ statement ``$\exists x \psi[x,y]$" is $\Gamma$-necessary over $V^\mathbb{P}$ so it holds in $V$ by MP$_{\Sigma_1}(\omega_1,\Gamma)$. We easily get in $V$, $(H_{\omega_2},\in)\vDash \exists x \psi[x,y]$.

Conversely, suppose $\sf{FA}_{\aleph_1}(\Gamma)$ holds. Let $\phi(v)$ be $\Sigma_1$ and $A\subseteq \omega_1$ be such that there is some $\mathbb{P}\in\Gamma$ such that $\phi(A)$ is $\Gamma$-necessary over $V^{\mathbb{P}}$. In particular, $\phi(A)$ is true in $V^{\mathbb{P}}$. Working in $V^{\mathbb{P}}$, by reflection, there is some $\kappa$ such that $(H_\kappa,\in)\vDash \phi(A)$. Let $M$ be the transitive collapse of the Skolem hull of $A$ in $(H_\kappa,\in)$ (we may fix a well-ordering of $H_\kappa$ and define our Skolem functions relative to this well-ordering). Since $|M|=\aleph_1$, $M$ is transitive, and $\phi$ is $\Sigma_1$, $(H^{V^\mathbb{P}}_{\omega_2},\in)\vDash \phi(A)$. By $\sf{FA}_{\aleph_1}(\Gamma)$, in $V$, $(H_{\omega_2},\in)\vDash \phi(A)$. 
\end{proof}

\begin{proof}[Proof of Theorem \ref{NothingImpliesAnything}]
\indent (1) We start with a model $M$ of $\rm{MP}(\omega_1,\Gamma)$. Working in $M$, let $\mathbb{P}$ be the $< \omega_2$-strategically closed forcing that adds a $\square_{\omega_1}$-sequence (cf. \cite[Example 6.6]{cummings2010iterated}). Since $\mathbb{P}$ does not add new $\omega_1$-sequences of ordinals, it's easy to see that $M^\mathbb{P}\vDash \rm{MP}$$(\omega_1,\Gamma)$. On the other hand, the existence of a $\square_{\omega_1}$ sequence implies $\sf{MM}^{++}$ fails in $M^\mathbb{P}$.

\indent (2) The first clause follows from Theorem \ref{MPvsFA}. We prove the second clause as follows. By \cite[Theorem 10.99]{Woodin}, whenever $M\vDash \sf{AD}^+$ and $M$ is closed under $\M_1^\sharp$, that is for any $a\in M$, $\M_1^\sharp(a)\in M$ (and is fully iterable in $M$), then for any $G\subseteq \mathbb{P}_{\rm{max}}$ generic over $M$, $M[G] \vDash \sf{BMM}$. Fix such an $M$ (we may assume $M\vDash \sf{AD}^+ + \neg \sf{AD}_{\mathbb{R}}$) and let 
\begin{center}
$N = L^{\M_1^\sharp}(\powerset(\mathbb{R})^M)$.\footnote{Let $F$ be the operator $x\mapsto \M_1^\sharp(x)$. As usual, we put $J_0(\powerset(\mathbb{R})^M) = tr. cl. (\powerset(\mathbb{R})^M)$. We take union at limit steps and $J^{\M_1^\sharp}_{\alpha+1}(\powerset(\mathbb{R})^M) = F(J^{\M_1^\sharp}_\alpha(\powerset(\mathbb{R})^M))$. See \cite{CMI} for more details on the fine structure and the exact stratification of this kind of hierarchies.}
\end{center}

It is well-known that the $\M_1^\sharp$ operator relativizes well, that is for any $a,b$, if $a\in L(b)$ then $\M_1^\sharp(a)\in L(\M_1^\sharp(b))$. It follows that $N$ is closed under $\M_1^\sharp$. By the definition of $N$, $N$ is not closed under $\M_2^\sharp$.

Now let $G\subseteq \mathbb{P}_{\rm{max}}$ be $N$-generic. Then by \cite[Theorem 10.99]{Woodin}, $N[G]\vDash \sf{ZFC} + \sf{BMM}$. By \cite[Theorem 4.49]{Woodin}, in $N[G]$, the ideal NS$_{\omega_1}$ is saturated. Furthermore, since $N$ is not closed under $\M_2^\sharp$, $N[G]$ isn't either. 

We claim that MP$(\omega_1,\Gamma)$ cannot hold in $N[G]$. Suppose not. Then by the proof of Theorem \ref{gettingPD}, $N[G]$ is closed under $\M_n^\sharp$ for all $n$. This contradicts the fact that $N[G]$ is not closed under $\M_2^\sharp$. 

\indent (3) The second clause follows from the fact that MP$(0,\Gamma)$ is consistent relative to $\sf{ZFC}$ and MP$(\omega,\Gamma)$ implies $0^\sharp$ exists (cf. Theorem \ref{BMM}). 

For the first clause, start with a model $M$ of MP$(\omega,\Gamma)$. In $M$, let  $\mathbb{P}$ be the poset that adds a Cohen subset of $\omega_1$ (conditions in $\mathbb{P}$ are countable functions from $\omega_1$ into $2$ ordered by end-extensions). Since $\mathbb{P}$ does not add new countable sequences of elements of $M$, we get that $M^\mathbb{P}\vDash \rm{MP}(\omega,\Gamma)$. Furthermore, $M^\mathbb{P}\vDash 2^{\aleph_0}=\aleph_1$, so by Theorem \ref{BMM}, MP$(\omega_1,\Gamma)$ must fail in $M^\mathbb{P}$.
\end{proof}

The proof of Theorem \ref{MM++vsMP} uses a characterization of $\sf{MM}^{++}$ from \cite{viale2011martin}.
\begin{lemma}
\label{MM++equivalence}
Suppose there are class many Woodin cardinals. Then the following are equivalent:
\begin{enumerate}[(1)]
\item $\sf{MM}^{++}$.
\item For every Woodin cardinal $\delta$ and every stationary set preserving forcing $\mathbb{P}\in V_\delta$, there is a complete embedding $i:\mathbb{P}\rightarrow \mathbb{B}=_{\rm{def}}\rm{r.o.}$$(\mathbb{P}_{<\delta}\rest T)$\footnote{$\mathbb{P}_{<\delta}\rest T$ is just $\mathbb{P}_{<\delta}$ restricted to conditions below $T$, where $\mathbb{P}_{<\delta}$ is the full stationary tower at $\delta$. Also, the notion of complete embedding is defined as in \cite{viale2011martin}.} for some stationary set $T\in V_\delta$ such that 
\begin{center}
$T\Vdash_{\mathbb{P}_{<\delta}} \rm{crt}(\dot{j}) > \omega_1^V \wedge |\dot{\mathbb{P}}| = \omega_1 \ $\footnote{$\dot{j}$ is a canonical $\mathbb{P}_{<\delta}$-name for the generic embedding induced by a $\mathbb{P}_{<\delta}$-generic.} 
\end{center}
and 
\begin{center}
$\Vdash_{\mathbb{P}} \mathbb{B}\slash i[\mathbb{P}]$ is stationary set preserving.
\end{center}
\end{enumerate}
\end{lemma}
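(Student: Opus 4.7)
Both directions run through Woodin's stationary tower forcing $\mathbb{P}_{<\delta}$ and its associated generic embedding $j : V \to M$, which in any $\mathbb{P}_{<\delta}$-generic extension $V[H]$ satisfies $M^{<\delta} \subseteq M$; the hypothesis on $T$ in (2) ensures that $\omega_1^V$ is preserved in $V[H]$ and fixed by $j$, while $\mathbb{P}$ is collapsed to have cardinality $\omega_1$.

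For $(2) \Rightarrow (1)$, I would fix a stationary set preserving poset $\mathbb{P}$, a sequence $\langle D_\alpha : \alpha < \omega_1\rangle$ of dense subsets of $\mathbb{P}$, and $\mathbb{P}$-names $\langle \dot S_\alpha : \alpha < \omega_1\rangle$ for stationary subsets of $\omega_1$, all lying in $V_\delta$ for some Woodin $\delta$. Applying (2) produces $T$ and a complete embedding $i : \mathbb{P} \to \mathbb{B} := \mathrm{r.o.}(\mathbb{P}_{<\delta} \restriction T)$ whose quotient is stationary set preserving. Taking a $V$-generic $H \subseteq \mathbb{B}$ below $T$, the filter $G := i^{-1}[H]$ is $V$-generic for $\mathbb{P}$, and the SSP-quotient assumption forces each $\dot S_\alpha[G]$ to remain stationary in $V[H]$, hence in $M$ (since $M \subseteq V[H]$ and $\omega_1^M = \omega_1^V$). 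With $\theta := |\mathbb{P}|^+$, the closure $M^{<\delta} \subseteq M$ places $j \restriction H_\theta$ and $j[G]$ in $M$, so the filter $G^{*} \subseteq j(\mathbb{P})$ generated by $j[G]$ meets every $j(D_\alpha)$ (witnessed by $j(p)$ for any $p \in G \cap D_\alpha$) and, since $j$ fixes $\omega_1^V$, satisfies $j(\dot S_\alpha)[G^{*}] \supseteq \dot S_\alpha[G]$, so $j(\dot S_\alpha)[G^{*}]$ is stationary in $M$. The statement ``there is a filter on $j(\mathbb{P})$ meeting every $j(D_\alpha)$ and realizing each $j(\dot S_\alpha)$ as a stationary set'' therefore holds in $M$, and pulling it back through the elementarity of $j$ yields the instance of $\sf{MM}^{++}$ for $\mathbb{P}$, $\langle D_\alpha \rangle$, $\langle \dot S_\alpha \rangle$.

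For $(1) \Rightarrow (2)$, assume $\sf{MM}^{++}$, fix a Woodin $\delta$, a stationary set preserving $\mathbb{P} \in V_\delta$, and $\theta := |\mathbb{P}|^+$, and define $T_\mathbb{P}$ to be the set of countable $X \prec H_\theta$ with $\mathbb{P} \in X$ for which there exists an $(X, \mathbb{P})$-generic filter $G_X$ such that $\dot S[G_X]$ is stationary for every $\mathbb{P}$-name $\dot S \in X$ of a stationary subset of $\omega_1$. The central step is to prove $T_\mathbb{P}$ stationary in $\mathcal{P}_{\omega_1}(H_\theta)$: given a club, enumerate the $\omega_1$-sized family of dense sets and stationary names coded inside a candidate $X$ and apply $\sf{MM}^{++}$ to produce the witnessing filter $G_X$, placing $X$ in $T_\mathbb{P}$. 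Once $T_\mathbb{P}$ is known to be stationary, the embedding $i : \mathbb{P} \to \mathrm{r.o.}(\mathbb{P}_{<\delta} \restriction T_\mathbb{P})$ is defined by $i(p) := \{X \in T_\mathbb{P} : p \in G_X\}$ for a fixed choice of witnessing filters; the clauses $\mathrm{crt}(\dot j) > \omega_1^V$ and $|\dot{\mathbb{P}}| = \omega_1$ are immediate from countability of each $X$ together with the effect of transitively collapsing elements of $T_\mathbb{P}$. The main obstacle is verifying that the quotient $\mathbb{B}/i[\mathbb{P}]$ is stationary set preserving; I would argue by contradiction, showing that any $\mathbb{P}$-name $\dot S$ for a stationary subset of $\omega_1$ allegedly killed by a condition $p \in \mathbb{B}$ below $T_\mathbb{P}$ would, via the generic embedding $j$ and the closure $M^{<\delta} \subseteq M$, exhibit inside $M$ an $X \in T_\mathbb{P}$ below $p$ whose witnessing filter $G_X$ realizes $\dot S_{G_X}$ as stationary---contradicting the assumed killing---whereupon the density argument needed for completeness of $i$ also follows from the stationarity of $T_\mathbb{P}$.
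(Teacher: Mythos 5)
Your proposal is correct and follows essentially the same route as the paper's own argument (which the authors largely defer to Viale's characterization of $\sf{MM}^{++}$ via the stationary sets $T_{\mathbb{P}}$ and his Lemma~2.1 on complete embeddings into $\rm{r.o.}(\mathbb{P}_{<\delta}\rest T_{\mathbb{P}})$): for $(2)\Rightarrow(1)$ you pull the correct filter through the stationary-tower embedding $j$ using $M^{<\delta}\subseteq M$ and reflect by elementarity, and for $(1)\Rightarrow(2)$ you build $i(p)=\{X\in T_{\mathbb{P}} : p\in G_X\}$ and verify the quotient is stationary set preserving by the same Łoś-plus-closure contradiction the paper uses. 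The only place you are sketchier than you should be is the verification that $i$ is a complete embedding and that $i^{-1}[H]$ coincides with the pullback $j^{-1}[G_{j[H_\theta]}]$ of the chosen correct filter, but this is exactly the content of the cited lemma of Viale, so the approach matches.
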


\begin{proof}[Proof of Theorem \ref{MM++vsMP}]

For the item 1., we prove MP$_{\Pi_2}(\omega_1,\Gamma)$ holds. Let $\phi$ be a $\Pi_2$ formula, $x$ be a subset of $\omega_1$, and $\mathbb{P}$ be a staionary set preserving forcing such that in $V^{\mathbb{P}}$, $\phi [x]$ is necessary with respect to further stationary set preserving forcings. We will show that $\phi [x]$ holds in $V$. 

Suppose not. Then $\phi [x]$ fails in $V$ and there exists some inaccessible $\gamma$ such that $\phi [x]$ fails also in $V_{\gamma}$. Let $\delta $ be a Woodin cardinal such that $\text{rank}(\mathbb{P}), \gamma  < \delta$. By the characterization of $\text{MM}^{++}$ under the presence of proper class many Woodin cardinals by Viale~\cite{viale2011martin} (cf. Lemma \ref{MM++equivalence}), there exists a $\mathbb{P}_{<\delta}$-generic $H$ over $V$ and $\mathbb{P}$-generic $G$ over $V$ in $V[H]$ such that $\mathbb{P}_{<\delta}/G$ is a stationary set preserving forcing in $V[G]$. Since $\phi [x]$ is necessary with respect to further stationary set preserving forcings in $V^{\mathbb{P}}$, in particular $\phi [x]$ holds in $V[H]$. 

We will derive a contradiction by showing that $\phi [x] $ actually {\it fails} in $V[H]$. Since $H$ is a $P_{<\delta}$-generic over $V$, there is a generic embedding $j \colon V \to M \subseteq V[H]$ associated to $H$ such that the critical point of $j$ is $\omega_2^V$, $j (\delta) = \delta$, and $V_{\delta}^M = V_{\delta}^{V[H]}$. Since $\phi [x]$ fails in $V_{\gamma}$ and $\gamma$ is inaccessible in $V$, by elementarity of $j$, $\phi [x]$ fails also in $V_{j(\gamma)}^M$ and $j(\gamma)$ is inaccessible in $M$. Since $\gamma < \delta$ and $j(\delta) = \delta$, $j(\gamma) < \delta$. Also since $V_{\delta}^M = V_{\delta}^{V[H]}$, $\phi [x]$ fails in $V_{j(\gamma)}^{V[H]}$ and $j(\gamma)$ is inaccessible in $V[H]$. Since $j(\gamma)$ is inaccessible in $V[H]$, $\Sigma_2$ statements are upward absolute from $V_{j(\gamma)}^{V[H]}$ to $V[H]$. In particular, the negation of $\phi [x]$ holds in $V[H]$ and so $\phi [x]$ fails in $V[H]$. Contradiction! This finishes the proof of the item 1..

For the item 2., assume that there are proper class many Woodin cardinals and a supercompact cardinal. By the arguments for Theorem~66 in \cite{MR3304634}, there is a class forcing extension where $V = \text{gHOD}$ holds and there are proper class many Woodoin cardinals and a supercompact cardinal\footnote{gHOD is the intersection of HODs of all set generic extensions of $V$, which is generically invariant, i.e. $\text{gHOD}^V = \text{gHOD}^{V^{\mathbb{P}}}$ for any set forcing poset $\mathbb{P}$.}. Let $\kappa$ be the least supercompact cardinal in this class forcing extension. Over this class forcing extension, we further force $\text{MM}^{++}$ using the supercompactness of $\kappa$. Let us call this further forcing extension as $W$. 

We shall show that $\text{MP}(\omega_1, \Gamma)$ {\it fails} in $W$. In $W$, $\kappa$ is equal to $\omega_2$ while $\kappa$ is the least supercompact in gHOD of $W$. Consider the statement \lq\lq the least supercompact cardinal in gHOD is less than $\omega_2$". This statement is easily seen to be necessary after collpsing $\omega_2$ in $W$ by a stationary set preserving forcing over $W$. However, this statement is false in $W$ since $\kappa = \omega_2^W$ is the least supercompact in gHOD. So $\text{MP}(\omega_1, \Gamma)$ fails in $W$, as desired. This finishes the proof of the item 2..

For the item 3., we assume that there are a supercompact cardinal $\kappa$ and boundedly many (but at least one) inaccessibles above $\kappa$. We force $\text{MM}^{++}$ using the supercompact $\kappa$. Let us call this forcing extension $W$ and we shall show that $\text{MP}(\omega_1, \Gamma)$ {\it fails} in $W$. In $W$, there are boundedly many (but at least one) inaccessibles. So there will be no inaacessible after collapsing a sufficiently big cardinal using a stationary set preserving forcing in $W$. Therefore, the $\Pi_2$ statement \lq\lq There is no inaccessible cardinal" is necessary in a stationary set preserving forcing extension of $W$. However, this statement is {\it false} in $W$ because there is at least one inaccesible in $W$. Therefore, $\text{MP}(\omega_1, \Gamma)$ for $\Pi_2$ statements is false in $W$, as desired. This finises the proof of the item 3..

\end{proof}

\begin{proof}[Proof of Theorem \ref{MM++vsStar}]
The item 1. follows from the fact that Woodin's Axiom (*) holds in the model $N[G]$ in the proof of Theorem~\ref{NothingImpliesAnything} because $N[G]$ is a $\mathbb{P}_{\text{max}}$ forcing extenion of a model of $\sf{AD}^+$ while $\text{MP}(\omega_1, \Gamma)$ fails in $N[G]$ as discussed in the proof of Theorem~\ref{NothingImpliesAnything}.

The item 2. can be verified by looking at the construction of a model in Thoerem~7.1 in \cite{MR2474445} where $\sf{MM}$ holds while Woodin's Axiom (*) fails . Larson actually showed that there is a lightface definable wellordering of $H_{\omega_2}$ over $H_{\omega_2}$ which implies the failure of Woodin's Axiom (*) by the homogenity of $\mathbb{P}_{\text{max}}$ forcing. Starting with a model of set theory where there are proper class many strongly compact cardinals and an inaccessible cardinal which is also reflecting, one can modify the construction of Larson's model in such a way that there is a lightface definable wellordering of $H_{\omega_2}$ over $H_{\omega_2}$ whereas $\text{MP}(\omega_1, \Gamma)$ (instead of $\sf{MM}$) holds  by the same argument as the one for Theorem~\ref{FromSupercompact}.

\end{proof}

\section{LOWER-BOUND CONSISTENCY STRENGTH}\label{Lowerbounds}

In this section, we prove Theorems \ref{BMM} and \ref{gettingPD}.

\begin{proof}[Proof of Theorem \ref{BMM}]

By Theorem \ref{MPvsFA}, $\rm{MP}(\omega_1,\Gamma)$ implies $\sf{BMM}$. By \cite{todorcevic2002generic}, $\sf{BMM}$ implies
\begin{center}
$2^{\aleph_0}=2^{\aleph_1}=\aleph_2$.
\end{center} 
By \cite{schindler2004semi}, $\sf{BMM}$ implies ``for all $X$, $X^\sharp$ exists". This gives the first clause. 

In fact, the proof of \cite{schindler2004semi} gives that MP$(\omega,\Gamma)$ implies for all $X$, $X^\sharp$ exists. We sketch the argument here. Suppose MP$(\omega,\Gamma)$ holds but for some $X$, $X^\sharp$ does not exist. By \cite{CodingIntoK}, there is a stationary-set preserving set forcing extension $V[g]$ of $V$ in which there is some $a\subseteq \omega$ such that $X\in H_{\omega_2}=L_{\omega_2}[a]$ (the argument uses the non-existence of $X^\sharp$ to construct such an $a$). In $V[g]$, the following statement holds:
\begin{center}
$\exists a\exists M (a\in M\cap \mathbb{R} \wedge |M|=\omega_1 \wedge \omega_1\in M \wedge M \textrm{ is transitive } \wedge M\vDash``\sf{ZFC}$$^- + a \textrm{ codes a reshaped subset of }\omega_1$".\footnote{See \cite[Definition 3.2]{schindler2004semi} for the definition of reshaped subsets.}
\end{center} 

The above is a $\Sigma_1$-statement $\Gamma$-necessary over $V[g]$, hence by MP$(\omega,\Gamma)$, it is true in $V$. Hence there are $a, M\in V$ satisfying the above statement. Then $a$ really does code a reshaped subset of $\omega_1$ in $V$ by the absoluteness of the coding. By \cite[Lemma 3.3]{schindler2004semi}, there is a stationary set preserving extension $V[h]$ in which there is $b\subseteq \omega$ coding a reshaped subset of $\omega_1$ and $f_b <^* f_a$ \footnote{The function $f_a$ is defined relative to $a$ as in \cite[Definition 3.2]{schindler2004semi}; similarly for $f_b$. $f_b<^* f_a$ if there is a club $C\subseteq \omega_1$ such that for all $\alpha\in C$, $f_b(\alpha) < f_a(\alpha)$.}. In $V[h]$, the following $\Sigma_1$ statement $\phi(a)$ is true:
\begin{center}
$\exists b\exists M (b\in M\cap \mathbb{R} \wedge |M|=\omega_1 \wedge \omega_1\in M \wedge M \textrm{ is transitive } \wedge M\vDash``\sf{ZFC}$$^- + b \textrm{ codes a reshaped subset of }\omega_1 + f_b <^* f_a$".
\end{center}

We get that $\phi(a)$ is $\Gamma$-necessary over $V[h]$ and hence by MP$(\omega,\Gamma)$, in $V$,  there is a real $b$ that codes a reshaped subset of $\omega_1$ and $f_b <^* f_a$. Repeat the above argument ad infinitum, we get that $<^*$ is ill-founded, which is absurd. This completes the proof of the second clause.

For the last clause, let $M$ be a model of ``$V = L(\wp(\mathbb{R})) + \textsf{AD}_\mathbb{R} + \Theta$ is regular." Let $G\subseteq \mathbb{P}_{\rm{max}}^M$ be $M$-generic and $H\subseteq \textrm{Coll}(\Theta,\wp(\mathbb{R}))^{M[G]}$ be $M[G]$-generic. Then $M[G][H] \vDash ``\textsf{ZFC} + \sf{MM(c)}$$ + V = L[X]$ for some $X\subseteq \omega_3"$. Fix such an $X$. If $M[G][H]=L[X] \vDash \rm{MP}(\omega_1,\Gamma)$, then $L[X] \vDash ``X^\sharp$ exists". Contradiction. Hence $\sf{MM(c)}$ does not imply $\rm{MP}(\omega_1,\Gamma)$.
\end{proof}

Now we prove Theorem \ref{gettingPD}. We need the following fact, whose proof follows from \cite[Theorem 0.3]{claverie2012woodin}.
\begin{theorem}\label{weakCovering}
Suppose $\mathcal{I}$ is a precipitous ideal on $\kappa$. Suppose for some $n$, $V$ is closed under the $\M_n^\sharp$-operator (if $n=0$, we write $\M_0^\sharp(x)$ for $x^\sharp$). Suppose there is no inner model with $n+1$ Woodin cardinals. Then the core model $K$ exists and $\kappa^+ = (\kappa^+)^K$.
\end{theorem}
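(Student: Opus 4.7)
The plan is to mimic the Claverie-Schindler argument cited after the theorem. First, under closure of $V$ under $\M_n^\sharp$ together with the absence of an inner model with $n+1$ Woodin cardinals, the Steel-Jensen core model construction, carried out at the level just below $n+1$ Woodins, produces a model $K$ with the standard package: definability over $V$, full iterability via $\mathcal{Q}$-structures (supplied precisely by $\M_n^\sharp$), a $\Pi_2$ characterization, and \emph{generic absoluteness}, i.e.\ $K^V = K^{V[g]}$ for any set-generic extension $V[g]$. Verifying that all of this goes through at the level of $n$ Woodins (rather than just at the measurable or Woodin level where the arguments are most classical) is the structural content of the $K$-existence theorem and would simply be quoted.

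Next, for the weak covering conclusion $\kappa^+ = (\kappa^+)^K$, I would argue by contradiction. Suppose $\lambda := (\kappa^+)^K < \kappa^+$, and let $G \subseteq \mathcal{P}(\kappa)/\mathcal{I}$ be $V$-generic. Precipitousness supplies a generic elementary embedding $j \colon V \to M \subseteq V[G]$ with $\mathrm{crit}(j) = \kappa$ and $M$ transitive in $V[G]$. Both hypotheses are robust under this forcing: closure under $\M_n^\sharp$ persists in $V[G]$ because $\M_n^\sharp$ commutes with set forcing, and the non-existence of an inner model with $n+1$ Woodin cardinals is downward absolute from $V[G]$ to $V$. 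Hence $K^{V[G]}$ exists and, by generic absoluteness, agrees with $K^V$ on their common ordinals.

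Finally, working in $V[G]$, I would form the phalanx $\mathcal{P} = (K, j(K); \kappa)$ and coiterate it against $K^{V[G]}$. Iterability of both sides comes from the $\M_n^\sharp$-closure, and the standard weak-covering comparison (as in Steel's handbook chapter and the Claverie-Schindler paper) shows that $\mathcal{P}$ does not move on the $K$-side and no drop occurs on the $j(K)$-side, so $j \restriction K$ is realized as part of a genuine internal $K$-iteration map with critical point $\kappa$. A standard cofinality computation then forces the cofinality of $\lambda$ in $V[G]$ to be at least $\kappa$, whereas the assumption $\lambda < \kappa^+$ in $V$ combined with the collapse of $\kappa^+$ in the generic ultrapower makes this cofinality drop below $\kappa$, yielding the desired contradiction. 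The main obstacle is exactly this phalanx coiteration step: certifying cofinal branches via $\mathcal{Q}$-structures, preventing drops on the principal side, and ruling out that a new Woodin cardinal appears along the comparison --- this is where the $\M_n^\sharp$-closure and the anti-large-cardinal hypothesis are jointly essential.
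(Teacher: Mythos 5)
The paper offers no proof of this theorem at all---it is stated as a fact ``whose proof follows from [Claverie--Schindler, Theorem 0.3]''---and your outline is a faithful reconstruction of exactly that cited argument (building $K$ below $n+1$ Woodin cardinals with $\M_n^\sharp$-guided $\Q$-structures, generic absoluteness of $K$, and the phalanx coiteration against the generic embedding supplied by precipitousness, ending in a cofinality contradiction), so you are taking essentially the same route, merely unpacking the black box the paper leaves closed. The one wording slip worth fixing: you need the hypothesis ``there is no inner model with $n+1$ Woodin cardinals'' to persist \emph{upward} from $V$ to $V[G]$ (equivalently, that the \emph{existence} of such an inner model is downward absolute under set forcing), not the downward absoluteness of the non-existence statement as you wrote it.
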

\begin{proof}[Proof of Theorem \ref{gettingPD}]
Let $\mathcal{I}$ be a precipitous ideal on $\omega_1$. By induction on $n$, we prove:
\begin{enumerate}[$(1)_n$]
\item $H_{\omega_1}$ is closed under $\M_n^\sharp$.
\item $H_{\omega_2}$ is closed under $\M_n^\sharp$.
\item $V$ is closed under $\M_n^\sharp$.
\end{enumerate}
\noindent \underline{The base case:} Assume $n=0$. If $x\in H_{\omega_1}$ then $\M_0^\sharp(x)$ exists by the fact that there is a precipitous ideal on $\omega_1$. So $(1)_0$ holds. To see $(2)_0$, it suffices to show that for every $A\subseteq \omega_1$, $\M_0^\sharp(A)$ exists. Let $G$ be $V$-generic for the forcing $\wp(\omega_1)\slash \mathcal{I}$. Let $i: V\rightarrow M \subseteq V[G]$ be the generic embedding induced by $G$. Then in $M$, $A\in H_{\omega_1}$, so $M\vDash \M_0^\sharp(A)$ exists. So $A$ does exist (in $V[G]$ and in $V$) since $M$ contains all the ordinals. 

Now we verify $(3)_0$. Suppose not. Let $\nu > \omega_2$ be such that there is some $A\in H_\nu$ such that $\M_0^\sharp(A)$ doesn't exist. Let $\mathbb{P} = \rm{Coll}(\omega_1,A)$. Note that $\mathbb{P}\in \Gamma$. Then the statement ``there is some $A\in H_{\omega_2}$ such that $\M_0^\sharp(A)$ doesn't exist" is $\Gamma$-necessary over $V^\mathbb{P}$, hence true in $V$. But this is a contradiction to $(2)_0$.
\\
\noindent \underline{The successor step:} Suppose $(3)_n$ holds for some $n$. We show $(1)_{n+1}-(3)_{n+1}$.  At this point, we have the following analogue of \cite[Lemma 7.5]{claverie2012woodin}, the proof of which is exactly that of \cite[Lemma 7.5]{claverie2012woodin}. As a matter of notation, for a mouse $\P$ over some $x$, we write $\P \leq \M_{n}^\sharp$ if either $\P$ is $n$-small or $\P = \M_n^\sharp(x)$.

All the following lemmas make use of our assumptions: MP$(\omega_1,\Gamma)$ and $\mathcal{I}$ is a precipitous ideal on $\omega_1$. We will not state them in the lemmas' statements.

\begin{lemma}
\label{absoluteness}
Suppose $(3)_n$ holds. Let $G\subseteq \wp(\omega_1)\slash \mathcal{I}$ be $V$-generic. Let $i:V\rightarrow M\subseteq V[G]$ be the generic embedding induced by $G$. Let $x\in M$ be a set of ordinals. Suppose $\P\in M$ be such that $\M\vDash ``\P$ is a mouse such that $\P\leq \M_n^\sharp$". Then $V[G] \vDash \P$ is a mouse. 
\end{lemma}
We should mention that one of the uses of Lemma \ref{absoluteness} is in proving Theorem \ref{weakCovering}. The point is that the proof of Theorem \ref{weakCovering} requires that $j(K)$ is iterable in $V[G]$ and Lemma \ref{absoluteness} guarantees this. We now show $(1)_{n+1}$. 
\begin{lemma}
\label{Mn+1sharpForReals}
Assume $(3)_n$ holds. Let $x\in \mathbb{R}$. Then $\M_{n+1}^\sharp(x)$ exists.   
\end{lemma}
\begin{proof}
For notational simplicity, suppose $x=\emptyset$. Suppose $\M_{n+1}^\sharp$ doesn't exist. Then $K^c$ does not have a Woodin cardinal, $(n+1)$-small, and is fully iterable via the $\Q$-structure guided strategy (the $\Q$-structures are given by the $\M_n^\sharp$ operator, which is defined on all of $V$ by $(3)_n$). This means $K$ exists and is fully iterable.

Let $i,G,M$ be as in Lemma \ref{absoluteness}. Let $\kappa=\omega_1^V$. By Theorem \ref{weakCovering}, $(\kappa^+)^K = \kappa^+$. Let $A\subseteq \kappa$ code $K||\kappa$ in $V$. Let $\mathbb{P} = \textrm{Coll}(\omega_1,\omega_2)$. Let $H\subseteq \mathbb{P}$ be $V$-generic. So $K^V = K^{V[G]}$ and 
\begin{center}
$(\kappa^+)^K=\omega_2^V < (\kappa^+)^{V[G]} = \omega_2^{V[G]}$.
\end{center}
By local definability of $K$, the displayed statement can be expressed in $V[G]$ by a formula $\phi(A)$. In fact, $\phi(A)$ is $\Gamma$-necessary over $V^\mathbb{P}$ and hence is true in $V$. This contradicts the fact that $(\kappa^+)^K = \omega_2^V$.
\end{proof}
Now we show $(2)_{n+1}$.
\begin{lemma}\label{Mn+1sharpForHomega2}
Suppose $(3)_n$ and $(1)_{n+1}$ holds. Then $(2)_{n+1}$ holds.
\end{lemma}
\begin{proof}
Let $A\subseteq \omega_1$. Let $i,G,M$ be as in Lemma \ref{absoluteness}. $A\in M$ and $A\in H_{\omega_1}^M$. By elementarily, $(1)_{n+1}$ holds in $M$ and hence $\M_{n+1}^\sharp(A)$ exists in $M$. Since $(3)_n$ holds, Lemma \ref{absoluteness} implies $(\M_{n+1}^\sharp(A))^M = (\M_{n+1}^\sharp(A))^{V[G]}$. This is because $\M_{n+1}^\sharp(A)$ is iterable by the $\Q$-structure guided strategy and these $\Q$-structures are mice $\leq \M_n^\sharp$. 

Write $\P$ for $(\M_{n+1}^\sharp(A))^M$. Let $H\subseteq \textrm{Coll}(\omega, \omega_3^V)$ be $V[G]$ generic. Then $\P$ is in fact fully iterable in $V[G][H]$ via a the following strategy $\Sigma$: $\Sigma(\T) = $ the unique $b$ such that $\Q(b,\T)$ exists and $\Q(b,\T)\unlhd \M_{n}^\sharp(\mathcal{M}(\T))$ (this uses the basic fact that $V$ is closed under $\M_n^\sharp$ implies $V[G]$ is closed under $\M_n^\sharp$ for any generic extension $V[G]$ of $V$). 

It's clear that there is a $J\subseteq \textrm{Coll}(\omega,\omega_3^V)$ be $V$-generic such that $V[J]=V[G][H]$. By homogeneity, $\P\in V$ and is iterable in $V$ by $\Sigma$ (i.e. $\Sigma\cap V \in V$). This proves the lemma.
\end{proof}
Finally, we verify $(3)_{n+1}$.
\begin{lemma}\label{Mn+1sharpArbitrary}
Suppose $(3)_n$ and $(2)_{n+1}$ hold. Then $(3)_{n+1}$ holds.
\end{lemma}
\begin{proof}
Suppose there is some $\nu>\omega_2$ and some set of ordinals $A\in H_{\nu}$ such that $\M_{n+1}^\sharp(A)$ doesn't exist. Let $\mathbb{P} = \textrm{Coll}(\omega_1,A)$. Again, $\mathbb{P}\in\Gamma$. Let $G\subseteq \mathbb{P}$ be $V$-generic. By arguments above, the statement ``there is some $A\in H_{\omega_2}$ such that $\M_{n+1}^\sharp(A)$ doesn't exist" is $\Gamma$-necessary over $V^\mathbb{P}$ and hence is true in $V$. This contradicts our assumption.
\end{proof}
This completes the proof of the theorem.
\end{proof}

\begin{remark}
A careful examination of the proof of the theorem shows that one does not need all of MP$(\omega_1,\Gamma)$. One simply need MP$(0,\Gamma)$.
\end{remark}

\section{OPEN PROBLEMS AND QUESTIONS}\label{OpenProblems}

In this section, we list some questions and open problems related to results proved above.

\begin{question}
Is $\rm{NMP}$$(\omega_1,\Gamma)$ consistent where $\Gamma$ is the class of $\sigma$-closed forcings?
\end{question}

We believe the question can be answered positively in light of Woodin's (unpublished) proof that NMP is consistent. 

\begin{question}
What is the exact consistency strength of MP$(\omega_1,\Gamma)$ and of MP$(\omega,\Gamma)$ for $\Gamma$ being the class of stationary-set preserving forcings?
\end{question}

We conjecture the following in light of Theorem \ref{gettingPD}.
\begin{conjecture}
Assume the hypothesis of Theorem \ref{gettingPD}. Then there is a model of $\omega$ Woodin cardinals.
\end{conjecture}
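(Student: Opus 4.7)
The plan is to extend the inductive construction of Theorem~\ref{gettingPD} past all finite stages and produce $\M_\omega^\sharp$, the minimal sharp for a proper class inner model with $\omega$ Woodin cardinals. By Theorem~\ref{gettingPD}, under the stated hypotheses $V$ is already closed under $\M_n^\sharp$ for every $n<\omega$, so we have all finite-level mouse operators available as $\Q$-structure operators, and the conjecture follows once we exhibit an inner model with $\omega$ Woodin cardinals.

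Assuming toward contradiction that no such inner model exists, I would set up the core model construction at the level ``below $\omega$ Woodin cardinals''. We build $K^c$ with $\Q$-structure strategy drawn from the family of operators $\{\M_n^\sharp : n<\omega\}$; the failure hypothesis forces $K^c$, hence $K$, to be $\omega$-small. An appropriate Claverie--Schindler style weak covering result in the spirit of Theorem~\ref{weakCovering}, adapted to this level, should then yield $(\omega_1^+)^K = \omega_2$.

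With $K$ in hand, I would replay the argument of Lemma~\ref{Mn+1sharpForReals}. Let $A\subseteq \omega_1$ code $K\|\omega_1$ and let $\mathbb{P} = \mathrm{Coll}(\omega_1,\omega_2)$. In $V^{\mathbb{P}}$ we have $(\omega_1^+)^{K[A]} < \omega_2$, and by local definability of $K$ from $A$ this is a statement $\phi(A)$. Because $K$ computed inside any further stationary-set-preserving extension agrees with $K^V$ and still satisfies $\omega$-smallness (using Lemma~\ref{absoluteness} one level up), $\phi(A)$ is $\Gamma$-necessary over $V^{\mathbb{P}}$. Hence $\mathrm{MP}(\omega_1,\Gamma)$ forces $\phi(A)$ to hold in $V$, contradicting weak covering there and yielding $\M_\omega^\sharp$.

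The main obstacle lies in the inner model theory at the limit level. Lemma~\ref{absoluteness} succeeded precisely because a single fixed operator $\M_n^\sharp$ produced all the $\Q$-structures, so iterability transferred cleanly into the generic ultrapower by $\wp(\omega_1)/\mathcal{I}$. In the limit case, the $\Q$-structures needed for iteration trees on $\omega$-small premice span unboundedly many finite levels, and one must prove a uniform iterability absoluteness theorem whose $\Q$-structures are drawn from $\bigcup_n \M_n^\sharp$, together with a weak covering theorem below $\omega$ Woodin cardinals. Both ingredients are on the current frontier of core model theory, which is why the statement remains a conjecture; supplying either would complete the proof along the lines above.
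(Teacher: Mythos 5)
The statement you are addressing is not a theorem of the paper but an explicitly open conjecture (stated ``in light of Theorem \ref{gettingPD}''), and your write-up does not prove it: as your own closing paragraph concedes, the two ingredients on which everything rests --- a uniform iterability-absoluteness lemma whose $\Q$-structures are drawn from all of $\{\M_n^\sharp : n<\omega\}$ simultaneously, and a weak covering theorem for a core model below $\omega$ Woodin cardinals --- are asserted, not supplied. These are not routine adaptations of Lemma \ref{absoluteness} and Theorem \ref{weakCovering}. The induction in the proof of Theorem \ref{gettingPD} works one $n$ at a time precisely because at each stage there is a single, fixed, total operator $\M_n^\sharp$ (already known to be defined on all of $V$ by $(3)_n$) that condenses, relativizes well, and determines the $\Q$-structures for all trees on $(n+1)$-small premice; generic absoluteness of iterability and the Claverie--Schindler covering argument are then quoted at that fixed level. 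At the limit there is no such single operator: an iteration tree on an $\omega$-small premouse needs $\Q$-structures at unboundedly many finite levels, and one must show that the assignment $\T\mapsto\Q(\T)$ is uniformly definable, total, and absolute between $V$, the generic ultrapower by $\wp(\omega_1)/\mathcal{I}$, and stationary-set-preserving extensions. Closure under each $\M_n^\sharp$ separately does not obviously yield this uniformity, and weak covering for an $\omega$-small $K$ built this way is not available off the shelf.

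More fundamentally, the route you sketch (build $K^c$ below $\omega$ Woodins, prove covering, contradict it with a $\mathrm{Coll}(\omega_1,\omega_2)$ argument) is not the technology by which results past finitely many Woodin cardinals are currently obtained; the established method is the core model induction, which climbs through the levels of $L(\mathbb{R})$ proving determinacy there and produces the relevant (hybrid) mice as witnesses, rather than a single $K$-plus-covering contradiction. So your proposal is a fair explanation of \emph{why} the conjecture is plausible --- it is essentially the heuristic the authors themselves invoke --- but the gap you identify at the end is the entire content of the open problem, and nothing in the preceding paragraphs fills it.
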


In \cite{SchindlerMMStar}, the author defines the axiom $\sf{MM}^{*,++}$ which implies both $\sf{MM}^{++}$ and Woodin's axiom (*). It is natural to ask whether $\sf{MM}^{*,++}$ implies MP$(\omega_1,\Gamma)$ for $\Gamma$ being the class of stationary-set preserving forcings.
\begin{question}
Let $\Gamma$ be the class of stationary-set preserving forcings. Does $\sf{MM}^{*,++}$ imply $\rm{MP}$$(\omega_1,\Gamma)$?
\end{question}

Let $\Gamma$ be as in Theorem \ref{MM++vsMP}. The formula produced in Theorem \ref{MM++vsMP}(2) is more complicated than $\Sigma_2$. By Theorem \ref{MM++vsMP}(1), MP$_{\Pi_2}(\omega_1,\Gamma)$ follows from $\sf{MM}$$^{++} + $ there is a proper class of Woodin cardinals.

\begin{question}
Let $\Gamma$ be as in Theorem \ref{MM++vsMP}. Assume $\sf{MM}$$^{++} + $ there is a proper class of Woodin cardinals. Must \rm{MP}$_{\Sigma_2}(\omega_1,\Gamma)$ hold?
\end{question}

\bibliographystyle{plain}
\small
\bibliography{NMP}
\end{document}